\documentclass[12pt]{article}
\usepackage{amsmath, amssymb, amsthm}
\usepackage[utf8]{inputenc}
\usepackage{mathrsfs}
\usepackage{lmodern}
\usepackage{geometry}
\usepackage{tikz-cd}
\usepackage{hyperref}
\hypersetup{colorlinks=true, linkcolor=blue, citecolor=blue}
\usepackage{microtype}
\usepackage{mathtools}
\usepackage{enumitem}
\usepackage{hyperref}
\newtheorem{theorem}{Theorem}
\newtheorem{corollary}[theorem]{Corollary}
\newtheorem{lemma}[theorem]{Lemma}
\newtheorem{definition}[theorem]{Definition}
\newtheorem{remark}[theorem]{Remark}
\newtheorem{proposition}{Proposition}

\newcommand{\Cor}{\operatorname{Cor}}

\newcommand{\F}{\mathcal{F}}

\newcommand{\Mreg}{M_{\mathrm{reg}}}
\newcommand{\Freg}{\mathcal{F}_{\mathrm{reg}}}
\newcommand{\vol}{\operatorname{vol}}

\DeclareMathOperator{\Diff}{Diff}

\DeclareMathOperator{\Aut}{Aut}

\title{Exponential mixing via invariant foliations and relatively Anosov homeomorphisms}
\author{Hamza Ounesli\\
ICTP, Trieste, Italy\\
\href{mailto:hounesli@ictp.it}{hounesli@ictp.it}}
\date{}

\begin{document}
\maketitle

\begin{abstract}
We prove that every closed manifold of dimension $n\ge 4$ which admits a singular $2$-foliation (a foliation by closed surfaces whose quotient is a punctured $(n-2)$-torus) supports a volume-preserving homeomorphism with exponential decay of correlations for H\"older observables. The proof introduces a class of systems called relatively Anosov homeomorphisms, which fails differentiability only at the singular set. We apply a general dichotomy that bounds the decay of correlations of any homeomorphism preserving an invariant foliation by the maximum of the decay rates of the quotient dynamics and of the leafwise cycles. This dichotomy is of independent interest and yields, as immediate consequences, decay rates for product systems, skew products, partially hyperbolic diffeomorphisms with compact center leaves, finite covers, and extensions by expanding fibre maps.
We then prove this gives a positive answer to a conjecture of Dolgopyat and Pesin regarding the realization problem for exponential decay of correlations over a large class of manifolds for which there were no known systems with exponential decay of correlations. In particular, we construct infinitely many pairwise non-homeomorphic closed $4$-manifolds which admit a volume-preserving homeomorphism with exponential decay of correlations, albeit not supporting either Anosov diffeomorphisms or strong partially hyperbolic diffeomorphisms.
\end{abstract}

\tableofcontents

\section{Introduction}

Decay of correlations is a central quantitative property in ergodic theory. For uniformly hyperbolic systems (Anosov, Axiom A) exponential decay was established by Sinai, Ruelle and Bowen using Markov partitions and spectral methods \cite{Sinai1968,Bowen1970,Ruelle1978}. For non-uniformly hyperbolic and partially hyperbolic systems, a variety of powerful techniques have been developed: Young towers \cite{Young1998}, coupling \cite{Dolgopyat1998}, transfer operators \cite{Liverani1995}, and more recently methods based on invariant foliations \cite{AvilaGouezel,BurnsDolgopyat}. We will define the main concepts in order to state our results.

Let $(M,\mu)$ be a probability space and $f:M\to M$ a measure-preserving transformation. For observables $\psi,\varphi\in L^2(\mu)$ the correlation function is
\[
\Cor_\mu(\psi,\varphi,f^n)=\int_M \psi\cdot(\varphi\circ f^n)\,d\mu - \int_M\psi\,d\mu\int_M\varphi\,d\mu.
\]
\begin{definition}
    We say $f$ has \emph{decay of correlations with rate $\rho_n$} (where $\rho_n\to0$) if for every $\psi,\varphi$ in a suitable class $\mathcal{H}$ (e.g. H\"older continuous) there exists a constant $C_{\psi,\varphi}$ such that $|\Cor_\mu(\psi,\varphi,f^n)|\le C_{\psi,\varphi}\rho_n$ for all $n$. If $\rho_n = e^{-\gamma n}$ for some $\gamma>0$, we speak of \emph{exponential decay}.
\end{definition}

A central question dating back to John von Neumann is the realization problem: it asks whether one can always construct a system on a given smooth manifold satisfying that property. A question raised by Dolgopyat and Pesin in the context of the smooth realization problem is whether every closed manifold admits a homeomorphism (or a diffeomorphism) preserving a smooth volume measure and exhibiting exponential decay of correlations with respect to the class of H\"older observables. This question is closely related to the broader program of understanding which manifolds support chaotic dynamical systems with strong statistical properties, see for instance \cite{Pesin, KatokHasselblatt}.

\emph{Conjecture: every closed manifold of dimension greater or equal to 3 admits a $C^k$ volume preserving system with exponential decay of correlation. }

Except in dimension two, there are essentially no known examples of manifolds admitting volume preserving homeomorphisms with exponential decay of correlations beyond manifolds of the form $(\mathbb{S}^2)^{k}\times N\times K$, where $N$ is an infranilmanifold (recall that an infranilmanifold is a manifold that is finitely covered by a nilmanifold of the form $G/\Gamma$, where $G$ is a nilpotent Lie group and $\Gamma$ is a cocompact lattice in $G$ \cite{FranksManning}) and $K$ is the unit tangent bundle of a manifold admitting a Riemannian metric with strictly negative curvature. In this setting, classical examples are provided by Anosov diffeomorphisms and certain partially hyperbolic systems, which are known to exhibit strong ergodic and statistical properties, including exponential decay of correlations in many cases \cite{Smale, BrinPesin, Dolgopyat}.

A corollary of our main result provides a new class of manifolds that gives a partial positive answer to the realization problem for exponential mixing, and which are not of the above form. We emphasize this point because, as mentioned earlier, the currently known systems exhibiting exponential mixing such as Anosov and partially hyperbolic systems impose strong topological restrictions on the underlying manifold. In particular, it is widely conjectured that smooth Anosov diffeomorphisms can exist only on infranilmanifolds \cite{FranksManning, KatokHasselblatt}. In the $C^0$ category, one may obtain additional examples by taking products with spheres, but these constructions remain closely tied to the infranilmanifold structure.

\begin{definition}
 A smooth $n$-dimensional manifold with $n \geq 4$ is said to admit a singular toral foliation if there exists a $C^1$ embedded closed submanifold $K\subset M$ (the singular set) of codimension at least $2$ such that $M\setminus K$ has a smooth $C^\infty$ foliation $\F$ for which the leaves are compact, boundaryless, mutually homeomorphic surfaces $\Sigma$ of genus $g\ge 0$, and the quotient $M/\F$ is homeomorphic to the punctured $(n-2)$-torus $\mathbb{T}^{n-2}\setminus\{p\}$. We call manifolds admitting such a structure \emph{singular toral}. If the singular set $K$ is empty, the quotient $M/\F$ is homeomorphic to the $(n-2)$-torus $\mathbb{T}^{n-2}$ and we call $M$ \emph{toral}.
\end{definition}

Our main result is the following.

\begin{theorem}\label{thm:main}
Every singular toral manifold $M$ admits a volume-preserving homeomorphism $f: M \to M$, which preserves the foliation $\mathcal F$ and is a $C^{1}$ diffeomorphism on $M \setminus K$, with exponential decay of correlations. In particular, if $M$ is toral, then $f$ is a $C^{1}$ diffeomorphism. 
\end{theorem}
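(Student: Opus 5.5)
The plan is to build $f$ as a ``relatively Anosov'' map: it acts on the quotient $M/\F\cong\mathbb{T}^{n-2}\setminus\{p\}$ through a hyperbolic toral automorphism, and on each leaf $\Sigma$ through a map that mixes within the leaf. Its correlations are then controlled by the dichotomy announced in the abstract. The decay rate of an $\F$-preserving homeomorphism is bounded by the maximum of the decay rate of the quotient dynamics and the decay rate of the leafwise cycles.

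\textbf{Step 1: base dynamics.} Choose $A\in SL(n-2,\mathbb{Z})$ hyperbolic. For $n-2\ge 2$ such automorphisms exist, e.g.\ $\begin{pmatrix}2&1\\1&1\end{pmatrix}$ in dimension $2$, and companion matrices of suitable irreducible polynomials in general. Translate so that $A$ fixes the puncture $p$. The induced map $\bar f$ on $\mathbb{T}^{n-2}\setminus\{p\}$ preserves Lebesgue measure and, being Anosov, mixes exponentially for H\"older observables. Any measure on the quotient that is smooth away from $p$ and gives no mass to $p$ agrees with Lebesgue up to density. If the pushforward of $\vol$ to $M/\F$ has a nonconstant density, I would first apply a Moser-type isotopy on $M\setminus K$ along transversals to make it constant, using the leaf areas as weights.

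\textbf{Step 2: lift to $M$.} Over $\mathbb{T}^{n-2}\setminus\{p\}$, the regular part $M\setminus K$ is a $\Sigma$-bundle. I would lift $\bar f$ to a bundle map $f$ covering it, with fibre maps $f_x:\Sigma_x\to\Sigma_{\bar f x}$ that preserve the normalized leaf area. To get leafwise mixing, I would compose the lift with a fibrewise map. For $g\ge1$ this is a fibrewise linear Anosov/pseudo-Anosov-type area-preserving map (for $g=1$, a hyperbolic toral automorphism); for $g=0$ it is a rotation combined with a twist whose return dynamics mixes. The leafwise rate enters the dichotomy only through the ``leafwise cycles,'' i.e.\ the dynamics along periodic leaf orbits. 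I would arrange these to be uniformly exponentially mixing, with constants independent of the leaf, by using the same fibre model in local trivializations and interpolating the transition maps.

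\textbf{Step 3: extension to $K$ and regularity.} Near $p$, the bundle degenerates onto $K$. I would define $f|_K$ by continuity, using that $\bar f$ fixes $p$ and contracts/expands toward it. Choosing the lift to be the identity in a collar of $K$ on the fibre side ensures that $f$ extends to a homeomorphism. Volume is preserved because $K$ has codimension $\ge2$, hence measure zero. The map is $C^1$ on $M\setminus K$ by construction, and when $K=\emptyset$ it is globally $C^1$. Applying the dichotomy then gives exponential decay for H\"older $\psi,\varphi$.

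\textbf{Main obstacle.} I expect the hardest part to be Step 3 combined with uniformity. The hyperbolicity of $\bar f$ near $p$ forces leaves to be stretched arbitrarily close to $K$, so the H\"older norms of the conditional observables on leaves, and the holonomy distortion, may blow up near $K$. To handle this, I would first estimate the volume of a $\delta$-neighbourhood of $K$ as $O(\delta^2)$ and truncate the observables there. I would then check that the $C^1$ norm of $f$ grows at most polynomially in $\mathrm{dist}^{-1}(\cdot,K)$, so the truncation error is balanced against an exponential rate, possibly a slightly smaller one.
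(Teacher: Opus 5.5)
Your overall architecture matches the paper's: a hyperbolic automorphism on the leaf space, a fixed mixing model on the leaves, and the max-of-rates bound of Proposition~\ref{prop:dichotomy}. The genuine gap is Step~2. ``Lift $\bar f$ to a bundle map'' is not a formality, and on many singular toral manifolds it is impossible. An $\mathcal F$-preserving homeomorphism covering $\bar f$ transports the homology of the leaves, so it forces the leaf monodromy to be $\bar f$-equivariant up to conjugacy.

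Take the paper's own $M_n$ from Corollary~\ref{cor:dehn}. The singular map of Theorem~\ref{thm:main2} is the bundle projection followed by a homeomorphism of $T^2$ supported in $D$. So the leaves are the fibres of the flat bundle with monodromy $\rho_n(a,b)=\tau_\alpha^{na}\tau_\beta^{b}$, and $K$ is itself a fibre. Let $B$ be the matrix by which your base map acts on $\pi_1(T^2)$. You then get $P\in GL(2g,\mathbb Z)$ with $P\,\rho_n(v)_*P^{-1}=\rho_n(Bv)_*$ on $H_1(\Sigma_g;\mathbb Q)$ for every $v\in\mathbb Z^2$. Since $\alpha,\beta$ are disjoint with independent classes, the image of $\rho_n(a,b)_*-\mathrm{id}$ is spanned by $na[\alpha]$ and $b[\beta]$. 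Its rank is therefore the number of nonzero entries of $(a,b)$. A hyperbolic $B$ is neither diagonal nor anti-diagonal, so one of $Be_1,Be_2$ has two nonzero entries. That gives rank $2$ against rank $1$, a contradiction.

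Hence no $\mathcal F$-preserving homeomorphism of $M_n$ covers a hyperbolic automorphism, and no choice of fibre maps repairs Step~2. The paper's device (define $f$ on the leaves of one dense orbit and extend by continuity, Lemma~\ref{finerchoice} and Proposition~\ref{extendingfromorbit}) would produce exactly such a homeomorphism. So it does not evade the obstruction either. On this route you need a compatibility hypothesis between the leaf monodromy and the base map (e.g.\ trivial monodromy), or different base dynamics.

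Even where a lift exists, your leafwise mechanism does not give what the dichotomy needs. Horizontal decay must hold for $\hat\mu$-almost every leaf along its whole forward cycle $f^n|_l\colon l\to f^n(l)$, with a constant independent of $l$. Periodic leaf orbits form a $\hat\mu$-null set and control nothing. In the paper, uniformity comes from the exact telescoping $f^n|_l=h_{n,l}^{-1}\circ A^n\circ h_{0,l}$. Here $A$ is the fixed leaf model (the pseudo-Anosov on $\Sigma$, not your base matrix), and the $h_{n,l}$ are uniformly bi-Lipschitz and volume-preserving. This is condition~3 of the relatively Anosov definition, built in by setting $f|_{l_n}=h_{n+1}^{-1}\circ A\circ h_n$ leaf by leaf.

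Your two devices break this:
\begin{itemize}
\item Interpolating transition maps replaces the telescoping by products $A g_1 A g_2\cdots$ with $g_i$ not commuting with $A$. The mixing of such products is not controlled by that of $A$.
\item Making the fibre maps the identity on a collar of $K$ breaks uniformity outright. For every $n$, a positive-measure set of leaves has base orbit staying near $p$ (along $W^s(p)$) for $n$ steps. On these leaves $f^n|_l$ is the identity in fibre coordinates, and for suitable H\"older observables the horizontal correlations do not decay. An integrated large-deviation estimate might salvage this, but that is not the argument you outline.
\end{itemize}

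So the obstacle you single out is not the decisive one. With the telescoping and uniform bi-Lipschitz bounds, the horizontal estimate only sees the H\"older norms of $\psi\circ h_{0,l}^{-1}$ and $\varphi\circ h_{n,l}^{-1}$; the growth of $\|Df\|$ near $K$ never enters. Truncation near $K$ may help with the possibly non-H\"older behaviour of the averages $\tilde\psi$ at $p$ in the vertical term. It does nothing about leaves that linger near $K$ at intermediate times. Finally, for $g=0$ a rotation composed with a twist is not mixing. You would need something like the sphere pseudo-Anosov obtained by quotienting a hyperbolic toral automorphism by $-\mathrm{id}$.
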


Now we want to show that this topological structure embodies a large class of manifolds that are new and provide a positive answer to the realization conjecture for exponential decay of correlation in the $C^0$ case raised by Dolgopyat and Pesin. We focus on dimension 4, although larger classes can be constructed in even higher dimensions.

\begin{theorem}\label{thm:main2}
Let $F$ be a closed smooth connected surface, and let
\[
\rho:\mathbb Z^{2}\longrightarrow \Diff(F)
\]
be a smooth action. Assume that there exists a point $x_{0}\in F$ fixed by every element of $\rho(\mathbb Z^{2})$. Let
\[
\rho_*:\mathbb Z^{2}\longrightarrow \Aut(\pi_{1}(F,x_{0}))
\]
also denote the induced action on the fundamental group.

Then there exist a closed smooth $4$-manifold $M$, a closed embedded surface $K\subset M$ diffeomorphic to $F$, and a smooth map
\[
f:M\longrightarrow T^{2}
\]
such that:
\begin{enumerate}[label=\textup{(\roman*)}]
\item $f(K)$ consists of one point;
\item $f$ is a submersion on $M\setminus K$;
\item
\[
\pi_{1}(M)\cong \pi_{1}(F,x_{0})\rtimes_{\rho_*}\mathbb Z^{2}.
\]
\end{enumerate}
Moreover, $M$ may be chosen to be the total space of the flat surface bundle over $T^{2}$ associated to $\rho$, and $f$ may be chosen to agree with the bundle projection outside a trivialized neighborhood of one fiber.
\end{theorem}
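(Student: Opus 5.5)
Take $M$ to be the flat bundle $M = (\mathbb R^2\times F)/\mathbb Z^2$, where $m\in\mathbb Z^2$ acts by $(t,x)\mapsto (t+m,\rho(m)x)$. The action on $\mathbb R^2\times F$ is free and properly discontinuous, so $M$ is a closed smooth $4$-manifold. The bundle projection $p:M\to T^2$ is a smooth submersion with fibre $F$. The fixed point $x_0$ gives a flat section $s:T^2\to M$, $s([t])=[t,x_0]$.

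\textbf{Step 1: fundamental group.} From the long exact sequence of the fibration, and using $\pi_2(T^2)=0$, we get $1\to\pi_1(F,x_0)\to\pi_1(M)\to\mathbb Z^2\to 1$. The section $s$ splits this sequence. Alternatively, $\pi_1(M)$ is the group of deck transformations of $\widetilde F\times\mathbb R^2$, which is $\pi_1(F)\rtimes\mathbb Z^2$. Because the base point $x_0$ is fixed, the conjugation action of a lift of $m\in\mathbb Z^2$ on $\pi_1(F,x_0)$ is exactly $\rho_*(m)$: transporting a loop based at $x_0$ around the flat section acts by $\rho(m)$ with no basepoint-change path needed. This gives (iii). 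The only care needed is the sign/inverse convention in the holonomy, which can be absorbed by reparametrising the $\mathbb Z^2$ factor.

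\textbf{Step 2: modifying the projection.} Choose a small disc $D\subset T^2$ around $[0]$ and a trivialization $p^{-1}(D)\cong D\times F$; flatness makes this canonical. The section is $s(D)=D\times\{x_0\}$. Take $K=\{0\}\times F\subset D\times F$, which is the fibre over $0$ and is diffeomorphic to $F$. We now need a map $f$ that equals $p$ outside $D\times F$, collapses $K$ to a point, and is a submersion elsewhere.

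The naive choice $f=p$ already satisfies (i) and (ii), with $K$ the fibre. This is presumably acceptable: the statement only requires $f(K)$ to be a point and $f$ to be a submersion on $M\setminus K$. If we want $f$ to genuinely fail to be a submersion on $K$ (a "singular" fibre), we can compose with a map $\phi:D\to D$ of the form $\phi(z)=z|z|^{2}$ in polar coordinates, interpolated to the identity near $\partial D$. Such a $\phi$ is smooth and radial, is a diffeomorphism away from $0$, has vanishing derivative at $0$, and is a homeomorphism. Set $f=\phi\circ p$ on $D\times F$ and $f=p$ elsewhere. The interpolation must keep $\phi$ a local diffeomorphism on $D\setminus\{0\}$. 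A radial map $r\mapsto h(r)$ with $h'>0$ for $r>0$, $h(r)=r^3$ near $0$ and $h(r)=r$ near $\partial D$ does this. Then $f$ is smooth, $f(K)=\{\phi(0)\}$ is a point, and $f$ is a submersion off $K$ because it is a composition of submersions there.

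\textbf{Step 3: checking the remaining claims.} Smoothness of $f$ at the boundary of the modified region holds because $\phi$ is the identity near $\partial D$. The claim that $f$ agrees with $p$ outside a trivialized neighbourhood of one fibre is immediate from the construction.

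I expect the main obstacle to be Step 1: getting the semidirect product structure with exactly $\rho_*$, including base point and orientation conventions. The construction of $f$ in Step 2 is routine.
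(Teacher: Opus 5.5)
Your proposal is correct and follows the paper's proof essentially step for step. You use the same flat bundle $(\mathbb R^2\times F)/\mathbb Z^2$, the splitting of the homotopy exact sequence by the section through $x_0$ with the action read off along the lifted loops, and the same radial modification $r\mapsto r^3$ near the centre of a trivialized disc (identity near $\partial D$, positive radial derivative), so that the critical set is exactly the fibre $K$.

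Your side remarks are also correct: the unmodified projection $p$ already satisfies (i)--(ii) as literally stated, and an inversion in the holonomy convention is harmless since $(h,v)\mapsto(h,-v)$ identifies the two semidirect products.
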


\begin{corollary}\label{cor:dehn}
Let $g\ge 2$, and let $\alpha,\beta\subset \Sigma_{g}$ be disjoint, non-isotopic, nonseparating simple closed curves whose homology classes are linearly independent. Choose a basepoint $x_{0}\in \Sigma_{g}$ outside annular neighborhoods of $\alpha$ and $\beta$. Let
\[
\tau_{\alpha},\tau_{\beta}\in \Diff(\Sigma_{g},x_{0})
\]
be smooth Dehn twists about $\alpha$ and $\beta$. For each integer $n\ge 1$, define
\[
\rho_{n}:\mathbb Z^{2}\to \Diff(\Sigma_{g},x_{0})
\]
by
\[
\rho_{n}(1,0)=\tau_{\alpha}^{n},
\qquad
\rho_{n}(0,1)=\tau_{\beta}.
\]
Let $M_{n}$ be the $4$-manifold obtained from \ref{thm:main2} applied to $\rho_{n}$.

Then:
\begin{enumerate}[label=\textup{(\roman*)}]
\item each $M_{n}$ is aspherical;
\item
\[
H_{1}(M_{n};\mathbb Z)\cong \mathbb Z^{2g}\oplus \mathbb Z/n;
\]
\item the manifolds $M_{n}$ are pairwise not homotopy equivalent.
\end{enumerate}
\end{corollary}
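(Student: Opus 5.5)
By Theorem~\ref{thm:main2} we may take $M_n$ to be the total space of the flat $\Sigma_g$-bundle over $T^2$ with monodromy $\rho_n$. The plan is to derive all three claims from this description together with the group $\pi_1(M_n)\cong \pi_1(\Sigma_g,x_0)\rtimes_{\rho_{n*}}\mathbb Z^2$. First we check that $\rho_n$ is a genuine $\mathbb Z^2$-action fixing $x_0$. The curves $\alpha$ and $\beta$ are disjoint, so the twists can be supported in disjoint annuli. Hence $\tau_\alpha^n$ and $\tau_\beta$ commute exactly, not merely up to isotopy, and both fix $x_0$.

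For (i) we use the long exact homotopy sequence of the fibration $\Sigma_g\to M_n\to T^2$. Since $g\ge 2$, both fibre and base are aspherical. So $\pi_k(M_n)=0$ for $k\ge 2$, and $M_n$ is aspherical.

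For (ii) we abelianize the semidirect product. Writing $\pi=\pi_1(\Sigma_g)$, this gives
\[
H_1(M_n;\mathbb Z)\cong H_1(\Sigma_g)_{\mathbb Z^2}\oplus \mathbb Z^2 ,
\]
where $H_1(\Sigma_g)_{\mathbb Z^2}$ denotes the coinvariants. The splitting holds because the extension is split, so the $\mathbb Z^2$ factor survives. The coinvariants are
\[
\mathbb Z^{2g}\big/\bigl\langle (\tau_\alpha^n)_*x-x,\ (\tau_\beta)_*x-x \bigr\rangle .
\]
By the Picard--Lefschetz formula, $(\tau_\alpha^n)_*x-x=n\,\langle x,a\rangle a$ and $(\tau_\beta)_*x-x=\langle x,b\rangle b$, up to sign, where $a=[\alpha]$ and $b=[\beta]$. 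The curves are disjoint, nonseparating and homologically independent. So we can choose a symplectic basis $a_1,b_1,\dots,a_g,b_g$ with $a=a_1$ and $b=a_2$; this is possible since disjoint independent nonseparating curves extend to a geometric symplectic basis. Some $x$ has $\langle x,a\rangle=1$, and similarly for $b$. The relations then become exactly $n a_1=0$ and $a_2=0$, so the coinvariants are $\mathbb Z^{2g-2}\oplus\mathbb Z/n$. Adding the base contributes $\mathbb Z^2$, giving $\mathbb Z^{2g}\oplus\mathbb Z/n$ as claimed.

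Claim (iii) follows immediately. The torsion subgroup of $H_1(M_n;\mathbb Z)$ is $\mathbb Z/n$, which is a homotopy invariant and distinguishes different $n$.

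The main obstacle is the homological bookkeeping in (ii). We must justify that the two twists' actions can be put simultaneously in the normal form above; this needs the basis adapted to both curves, since they are disjoint. We also need to confirm there are no further relations from the semidirect structure beyond the coinvariants. For the latter we would use the standard formula $H_1(N\rtimes Q)\cong H_1(N)_Q\oplus H_1(Q)$. One caveat: the $\beta$-relation kills $a_2$, leaving rank $2g-2$ from the fibre. If the intended normalization differs, the rank count must be rechecked against the claimed $2g$, but it should come out to $2g-2+2=2g$ as computed.
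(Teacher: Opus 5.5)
Your proposal is correct and follows the paper's proof essentially step for step: the twists commute because their annuli are disjoint, you compute the coinvariants $\mathbb Z^{2g}/\langle na_1,a_2\rangle$ in a symplectic basis with $[\alpha]=a_1$, $[\beta]=a_2$, and you use the split-extension abelianization formula, which is the paper's Lemma~\ref{lem:ab}. The only minor deviations are that you get asphericity from the long exact homotopy sequence rather than from the universal cover $\mathbb R^2\times\widetilde\Sigma_g$, and that you deduce (iii) directly from homotopy invariance of the torsion in $H_1$, which is slightly cleaner than the paper's route through $\pi_1$ and asphericity.
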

\begin{corollary}\label{cor:anosov}
For $n\ge 1$, let
\[
A_{n}
=
\begin{pmatrix}
1 & n\\
0 & 1
\end{pmatrix}
\in SL(2,\mathbb Z).
\]
Let
\[
\rho_{n}^{\mathrm{lin}}:\mathbb Z^{2}\to \Diff(T^{2},0)
\]
be the linear action defined by
\[
\rho_{n}^{\mathrm{lin}}(1,0)=A_{n},
\qquad
\rho_{n}^{\mathrm{lin}}(0,1)=I.
\]
Let $X_{n}$ be the $4$-manifold obtained from \ref{thm:main2}.

Then:
\begin{enumerate}[label=\textup{(\roman*)}]
\item $X_{n}$ is diffeomorphic to $N_{n}\times S^{1}$, where $N_{n}$ is the mapping torus of $A_{n}:T^{2}\to T^{2}$;
\item $X_{n}$ is a compact nilmanifold modeled on $Nil^{3}\times \mathbb R$;
\item $X_{n}$ admits no Anosov diffeomorphism;
\item
\[
H_{1}(X_{n};\mathbb Z)\cong \mathbb Z^{3}\oplus \mathbb Z/n.
\]
In particular, the $X_{n}$ are pairwise not homotopy equivalent.
\end{enumerate}
\end{corollary}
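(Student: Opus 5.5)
The plan is to make the output of Theorem~\ref{thm:main2} completely explicit for the linear action $\rho_n^{\mathrm{lin}}$, and then to read off (i)--(iv) from the fundamental group. By the ``moreover'' clause of Theorem~\ref{thm:main2}, $X_n$ is the flat $T^2$-bundle over $T^2$ associated to $\rho_n^{\mathrm{lin}}$:
\[
X_n=(\mathbb R^2\times T^2)/\mathbb Z^2,\qquad (s,t,v)\sim (s+1,t,A_n v)\sim(s,t+1,v).
\]
Since $\rho_n^{\mathrm{lin}}(0,1)=I$, the $t$-coordinate splits off as a trivial circle factor. The quotient in the $(s,v)$ variables is by definition the mapping torus $N_n$ of $A_n$. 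Hence $X_n\cong N_n\times S^1$ by an explicit diffeomorphism, which gives (i). By (iii) of Theorem~\ref{thm:main2},
\[
\pi_1(X_n)\cong (\mathbb Z^2\rtimes_{A_n}\mathbb Z)\times\mathbb Z .
\]

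For (ii), write $\Gamma_n=\mathbb Z^2\rtimes_{A_n}\mathbb Z$ with generators $e_1,e_2,\tau$ and relations
\[
\tau e_1\tau^{-1}=e_1,\qquad \tau e_2\tau^{-1}=e_1^{\,n}e_2,\qquad [e_1,e_2]=1 .
\]
This is the standard lattice in the real Heisenberg group $Nil^3$, with $e_1$ central and $[\tau,e_2]=e_1^n$. Equivalently, $A_n=\exp(nE_{12})$ is unipotent, so the mapping torus structure integrates to $Nil^3$. Concretely, I would exhibit $N_n$ as $Nil^3/\Gamma_n$ and check that the fibration $N_n\to S^1$ matches. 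Then $X_n=(Nil^3\times\mathbb R)/(\Gamma_n\times\mathbb Z)$ is a compact nilmanifold.

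For (iv), abelianize. The relation $\tau v\tau^{-1}=A_nv$ becomes $(A_n-I)v=0$, and $(A_n-I)$ sends $e_2\mapsto ne_1$. This gives
\[
H_1=\mathbb Z/n\langle e_1\rangle\oplus\mathbb Z\langle e_2\rangle\oplus\mathbb Z\langle\tau\rangle\oplus\mathbb Z\langle t\rangle\cong\mathbb Z^3\oplus\mathbb Z/n .
\]
Homotopy equivalent spaces have isomorphic $H_1$. The torsion $\mathbb Z/n$ distinguishes the $X_n$, and for $n=1$ there is no torsion.

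The main obstacle is (iii). Here I would invoke the Franks--Manning theorem \cite{FranksManning}: any Anosov diffeomorphism of a nilmanifold is topologically conjugate to a hyperbolic nilmanifold automorphism. So it suffices to show that no automorphism $\Phi$ of $\pi_1(X_n)$ extends to a hyperbolic automorphism of the Lie algebra $\mathfrak n=\mathfrak h_3\oplus\mathbb R$. The argument runs as follows.
\begin{enumerate}[label=\textup{(\alph*)}]
\item The derived subgroup $[\pi_1,\pi_1]$ is the infinite cyclic group $\langle e_1^n\rangle$, and it is characteristic.
\item Hence $\Phi$ acts on it by $\pm1$.
\item Under the Mal'cev correspondence, $[\pi_1,\pi_1]$ spans the one-dimensional ideal $[\mathfrak n,\mathfrak n]$.
\item Therefore the Lie algebra automorphism $d\Phi$ preserves $[\mathfrak n,\mathfrak n]$ and acts on it by the eigenvalue $\pm1$.
\end{enumerate}
An eigenvalue of modulus one means $d\Phi$ is not hyperbolic, so $X_n$ admits no Anosov diffeomorphism. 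The step needing the most care is the Mal'cev extension, namely identifying the rational span of the lattice's derived subgroup with $[\mathfrak n,\mathfrak n]$. This is standard, since the lattice is cocompact and the derived series is rational.
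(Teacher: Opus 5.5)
Your proposal is correct and follows the paper's route. You split off the trivial circle factor for (i), recognize $\mathbb Z^2\rtimes_{A_n}\mathbb Z$ as a Heisenberg lattice via unipotence for (ii), and abelianize (taking coinvariants) for (iv). For (iii) you combine Franks--Manning with the fact that any lattice-preserving automorphism acts by $\pm1$ on the one-dimensional derived algebra $[\mathfrak n,\mathfrak n]$. The one difference is how you get the $\pm1$: you derive it from the characteristic infinite cyclic subgroup $[\pi_1,\pi_1]=\langle e_1^n\rangle$ via Mal'cev, whereas the paper computes $\Phi(Z)=(\det C)\,Z$ and simply asserts $\det C=\pm1$, so your argument supplies the justification the paper leaves implicit.
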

In a subsequent paper, we will use part of the results obtained in this paper, mainly to generalize the concept of relatively Anosov homeomorphism and give a positive answer to the realization problem for exponential mixing for the $C^0$ case in all dimensions greater or equal to $4$.
\begin{center}
\textbf{Acknowledgments}
\end{center}
I would like to thank Yakov Pesin for introducing me to this problem and for reading this manuscript, I am grateful for his comment and energy. I would like also to thank Stefano Luzzatto for his constant encouragment and for introducing me to Pesin. I am thankful to ICTP and SISSA for their financial support and IMPA for their hospitality in 2024 where part of this work was done.
\section{Overview}

\subsection{Relatively Anosov homeomorphisms}\label{sec:relano}
From now on we assume that $M$ ($n\ge 4$) is a singular toral manifold with a foliation $\F$ and singular set $K$.
 Let $\pi:M\setminus K\to \mathbb{T}^{n-2}\setminus\{p\}$ be the projection. Choose a linear Anosov diffeomorphism $F:\mathbb{T}^{n-2}\to\mathbb{T}^{n-2}$ with a fixed point at $p$ (for example, a hyperbolic matrix in $SL(n-2,\mathbb{Z})$ with all eigenvalues not on the unit circle). $F$ preserves Lebesgue measure and has exponential decay of correlations. Fix a closed surface $\Sigma$ of genus $2$ and a pseudo-Anosov homeomorphism $A:\Sigma\to\Sigma$ that preserves a smooth area form and has exponential decay of correlations \cite{FathiLaudenbachPoenaru,BestvinaHandel,Liverani1995}.

\begin{definition}
A homeomorphism $f:M\to M$ is \emph{relatively Anosov} (with respect to $\F$) if:
\begin{enumerate}
\item $f$ preserves $\F$ (i.e. $f(l)$ is a leaf for every leaf $l$);
\item the induced map on the quotient $\mathbb{T}^{n-2}\setminus\{p\}$ coincides with $F$ (and extends continuously to $p$ as a fixed point);
\item there exists a constant $L>0$ such that for every leaf $l$ and every $n\ge 0$ there is a $C^1$ volume-preserving diffeomorphism $h_{n,l}:f^n(l)\to \Sigma$ with $\|h_{n,l}\|_{C^1}\le L$ and $\|h_{n,l}^{-1}\|_{C^1}\le L$, and the diagram
\[
\begin{tikzcd}
f^n(l) \arrow[r, "f"] \arrow[d, "h_{n,l}"] & f^{n+1}(l) \arrow[d, "h_{n+1,l}"] \\
\Sigma \arrow[r, "A"] & \Sigma
\end{tikzcd}
\]
commutes: $h_{n+1,l}\circ f|_{f^n(l)} = A \circ h_{n,l}$.
\end{enumerate}
\end{definition}

A key ingredient to the proof is the existence of such systems under our assumption.
\begin{proposition}\label{prop:relanosov}
    Every manifold admitting a singular toral foliation admits a relatively Anosov homeomorphism.
\end{proposition}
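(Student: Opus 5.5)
The plan is to build $f$ leaf by leaf on $M\setminus K$ by lifting $F$ through the fibration $\pi$, and then to extend it by the identity, or any homeomorphism of $K$, across the singular set. Since the leaves of $\F$ are compact and mutually homeomorphic, and the leaf space is the manifold $\mathbb{T}^{n-2}\setminus\{p\}$, Ehresmann's theorem applies to the proper submersion $\pi:M\setminus K\to\mathbb{T}^{n-2}\setminus\{p\}$. It shows that $\pi$ is a smooth locally trivial fibre bundle with fibre $\Sigma$; this uses the standard fact that a foliation by compact leaves with Hausdorff manifold leaf space and trivial holonomy is a fibration. For the construction we identify the model surface in the definition with the leaf type $\Sigma$. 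If the genus of the leaves differs from $2$, we replace $A$ by an area-preserving exponentially mixing map on that surface. A pseudo-Anosov map works for genus $\ge 2$. A linear Anosov map works for the torus. The sphere needs separate treatment (for instance a $C^0$ model such as the Katok map), and I would state this caveat explicitly.

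\textbf{Step 1 (fibrewise conjugacy).} Fix a smooth connection, i.e.\ a complement to $T\F$, chosen to preserve the leafwise area forms; this is possible after normalizing the volume by Moser's trick, so that leafwise areas are constant. Choose a reference leaf $l_0=\pi^{-1}(q_0)$ and an area-preserving identification $h_0:l_0\to\Sigma$. For each $q$ choose a path $\gamma_q$ from $q_0$ to $q$ and set $h_q=h_0\circ P_{\gamma_q}^{-1}$, where $P$ is parallel transport. This depends on the path only up to monodromy in $\mathrm{Diff}(\Sigma)$.

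\textbf{Step 2 (the map).} Define $f|_{\pi^{-1}(q)}=h_{F(q)}^{-1}\circ A\circ h_q$. For this to be globally well defined and continuous, $A$ must commute, up to isotopy, with the monodromy of the bundle along loops, twisted by $F_*$ on $\pi_1(\mathbb{T}^{n-2}\setminus\{p\})$. This is the main obstacle. The fix I would try first is to lift $F$ to a bundle map $\tilde F$ of $M\setminus K$. Such a lift exists when the monodromy representation is $F_*$-equivariant up to conjugacy, which is arguably part of the hypothesis once one replaces $F$ by a suitable power or chooses it compatibly. Then we set $f=\tilde F\circ\Phi$, where $\Phi$ acts fibrewise by $h_q^{-1}Ah_q$ using a fibrewise trivialization over a fundamental domain. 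Near $K$ the bundle becomes unbounded in geometry, so we should not expect uniform $C^1$ bounds there. To handle this, we choose the trivializations $h_{n,l}$ along each orbit $f^n(l)$ separately; the commuting diagram then holds by definition, namely $h_{n+1,l}:=A\circ h_{n,l}\circ f^{-1}$. Uniform $C^1$ control reduces to choosing $\tilde F$ so that the fibre maps $h_{F(q)}\tilde F h_q^{-1}$ lie in a compact subset of $\mathrm{Diff}^1(\Sigma)$ modulo conjugation by $A$. This compactness is ensured by taking $\tilde F$ to be the connection lift of $F$ outside a neighbourhood $U$ of $p$, and inside $U$ by using that $p$ is a hyperbolic fixed point, so that each orbit spends only bounded consecutive time in any fixed annulus.

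\textbf{Step 3 (extension and volume).} Since $F(p)=p$ and $F$ is proper on the complement, $f$ sends a neighbourhood basis of $K$ to itself. It therefore extends continuously to $K$ by a homeomorphism of $K$, which exists by codimension and the $C^1$ structure. Volume preservation holds because $F$ preserves Lebesgue measure, $A$ preserves area, and the connection preserves the leafwise areas, so $f$ preserves the product-type volume on $M\setminus K$. The set $K$ has measure zero. I expect the genuinely delicate point to be the uniform $C^1$ bound near $K$ combined with the monodromy compatibility in Step 2.
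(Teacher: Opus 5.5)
Your route differs from the paper's. The paper never uses Ehresmann's theorem or a connection. It defines $f$ only on the forward orbit $\Gamma_{l_\star}$ of a leaf whose $F$-orbit is dense, by $f|_{l_n}=h_{n+1}^{-1}\circ A\circ h_n$ with ``adapted'', uniformly bi-Lipschitz $h_n:l_n\to l_\star$ (Lemma~\ref{finerchoice}, Proposition~\ref{prop:adaptedchoicelip}). It then extends to all of $M$, including $K$, by uniform continuity (Proposition~\ref{extendingfromorbit}).

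Your proposal has a genuine gap exactly at the step you call the main obstacle, and it cannot be closed as you suggest. The existence of the fibre-preserving lift $\tilde F$ is not ``arguably part of the hypothesis''; it is \emph{necessary} for the statement itself. Any relatively Anosov $f$ maps $M\setminus K$ onto itself. When $\pi$ is a bundle, as in your Step 1, the map $x\mapsto(\pi(x),f(x))$ is then a bundle isomorphism $M\setminus K\cong F^*(M\setminus K)$. Hence $\rho\circ F_*$ must be conjugate to the monodromy $\rho:\pi_1(\mathbb{T}^{n-2}\setminus\{p\})\to\pi_0\mathrm{Homeo}(\Sigma)$.

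This fails for the paper's own examples:
\begin{itemize}
\item For $X_n$ of Corollary~\ref{cor:anosov}, with the structure of Theorem~\ref{thm:main2}, $\rho$ factors through $\bar\rho:\mathbb{Z}^2\to GL(2,\mathbb{Z})$ with $\bar\rho(e_1)=A_n$ and $\bar\rho(e_2)=I$. Let $B$ be the hyperbolic matrix by which $F$ acts on $H_1(\mathbb{T}^2)$. A conjugacy between $\bar\rho\circ B$ and $\bar\rho$ forces $B(\ker\bar\rho)=\ker\bar\rho=\mathbb{Z}e_2$, so $Be_2=\pm e_2$. This is impossible for $B$ and for all its powers.
\item For the $M_n$ of Corollary~\ref{cor:dehn}, write $Be_1=(a,c)$ and $Be_2=(b,d)$. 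For some mapping class $\phi$, the single-twist powers $\phi\tau_\alpha^n\phi^{-1}$ and $\phi\tau_\beta\phi^{-1}$ would have to equal the multitwists $\tau_\alpha^{na}\tau_\beta^{c}$ and $\tau_\alpha^{nb}\tau_\beta^{d}$. That forces $ac=bd=0$, so $B$ is diagonal or antidiagonal, again not hyperbolic.
\end{itemize}
So neither a power of $F$ nor a different choice of $F$ rescues Step 2; an extra hypothesis on the monodromy is unavoidable. Nor can you import the paper's construction. There the obstruction is hidden in Lemma~\ref{finerchoice}, whose proof redefines $h_{n_k}$ separately for each convergent subsequence. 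Done consistently along a dense orbit, this would yield leaf identifications varying continuously with the leaf, i.e.\ a trivialization, which nontrivial monodromy forbids.

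Even where $\tilde F$ exists, two further steps fail as written.
\begin{itemize}
\item Your formula $f|_{\pi^{-1}(q)}=h_{F(q)}^{-1}\circ A\circ h_q$ is single-valued only if $A$ intertwines the holonomy exactly, $A\,\rho(\gamma)\,A^{-1}=\rho(F_*\gamma)$ as maps of $\Sigma$, not just up to isotopy. Likewise $\Phi=h_q^{-1}Ah_q$, built over a fundamental domain, is continuous across its boundary only if $A$ commutes with the holonomy. Both conditions are far stronger than the existence of $\tilde F$. Without such an exact relation, a compactness condition on $h_{F(q)}\tilde F h_q^{-1}$ ``modulo conjugation by $A$'' does not control the non-autonomous products $g_{n-1}A\cdots g_0A$. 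So $h_{n,l}=A^n\circ h_{0,l}\circ f^{-n}$ need not stay uniformly $C^1$-bounded.
\item In Step 3 you cannot extend ``by the identity or any homeomorphism of $K$''. Since $M\setminus K$ is dense, $f|_K$ is forced by continuity, and you must prove that the leaf maps converge as leaves approach $K$. That convergence depends on the monodromy around $p$ and on how leaves degenerate near $K$. Moreover, $F$-orbits passing near $p$ spend arbitrarily long times in shrinking neighbourhoods of $p$, so ``bounded time in a fixed annulus'' gives no uniformity there.
\end{itemize}
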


For which we establish exponential decay of correlations.

\subsection{Product of decays}\label{sec:dichotomy}

Let $(M,\mu)$ be a probability space and $f:M\to M$ a measure-preserving transformation. For observables $\psi,\varphi\in L^2(\mu)$ the correlation function is
\[
\Cor_\mu(\psi,\varphi,f^n)=\int_M \psi\cdot(\varphi\circ f^n)\,d\mu - \int_M\psi\,d\mu\int_M\varphi\,d\mu.
\]
We say $f$ has \emph{decay of correlations with rate $\rho_n$} (where $\rho_n\to0$) if for every $\psi,\varphi$ in a suitable class $\mathcal{H}$ (e.g. H\"older continuous) there exists a constant $C_{\psi,\varphi}$ such that $|\Cor_\mu(\psi,\varphi,f^n)|\le C_{\psi,\varphi}\rho_n$ for all $n$. If $\rho_n = e^{-\gamma n}$ for some $\gamma>0$, we speak of \emph{exponential decay}.

Now let $M$ be a compact manifold and $f:M\to M$ a homeomorphism. Let $\F$ be an invariant foliation with compact leaves, and denote by $\pi:M\to M/\F$ the projection and by $F:M/\F\to M/\F$ the induced map. For a Borel probability measure $\mu$ on $M$, the Rokhlin disintegration theorem \cite{Rokhlin} gives a measure $\hat\mu$ on $M/\F$ and a family $\{\mu_l\}$ of probability measures on the leaves such that $\mu=\int \mu_l\,d\hat\mu(l)$. For a leaf $l$, define the \emph{$l$-cycle} as the sequence $f_{n,l}=f^n|_l:l\to f^n(l)$.

\begin{definition}[Horizontal decay]
$f$ has \emph{horizontal decay} with rate $\delta_n$ if for $\hat\mu$-almost every $l$ and all $\psi,\varphi\in\mathcal{H}$,
\[
\Bigl|\int_l \psi|_l\cdot(\varphi|_{f^n(l)}\circ f_{n,l})\,d\mu_l - \int_l\psi|_l\,d\mu_l \int_{f^n(l)}\varphi|_{f^n(l)}\,d\mu_{f^n(l)}\Bigr| \le C_{\psi,\varphi}\delta_n.
\]
\end{definition}

\begin{definition}[Vertical decay]
For $\psi\in\mathcal{H}$ define $\tilde\psi(l)=\int_l\psi|_l\,d\mu_l$. The quotient map $F$ has \emph{vertical decay} with rate $\gamma_n$ if for all $\psi,\varphi\in\mathcal{H}$,
\[
\Bigl|\int_{M/\F} \tilde\psi\cdot(\tilde\varphi\circ F^n)\,d\hat\mu - \int_{M/\F}\tilde\psi\,d\hat\mu\int_{M/\F}\tilde\varphi\,d\hat\mu\Bigr| \le C_{\psi,\varphi}\gamma_n.
\]
\end{definition}

\begin{proposition}\label{prop:dichotomy}
Let $f:M\to M$ be a homeomorphism preserving an invariant foliation $\F$ whose leaves are compact. Let $\mu$ be a Borel probability measure on $M$ such that the singular set of $\F$ has $\mu$-measure zero. Suppose that the quotient map $F$ has vertical decay with rate $\gamma_n$ and that $f$ has horizontal decay with rate $\delta_n$. Then for every $\psi,\varphi\in\mathcal{H}$,
\[
|\Cor_\mu(\psi,\varphi,f^n)|\le C_{\psi,\varphi}\,\max\{\gamma_n,\delta_n\}.
\]
\end{proposition}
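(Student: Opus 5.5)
The plan is to decompose the correlation using the Rokhlin disintegration and to estimate the two resulting pieces with the horizontal and vertical decay hypotheses. Because the singular set of $\F$ has $\mu$-measure zero, we may work on the regular part only, where the disintegration $\mu=\int \mu_l\,d\hat\mu(l)$ is defined. We will also use invariance of the disintegration under $f$. Since $\mu$ is $f$-invariant and $f$ maps leaves to leaves, uniqueness of the Rokhlin disintegration gives $\hat\mu$-invariance under $F$ and $f_*\mu_l=\mu_{f(l)}$ for $\hat\mu$-a.e. $l$. Iterating, we get $(f_{n,l})_*\mu_l=\mu_{f^n(l)}$.

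First, write
\[
\int_M \psi\,(\varphi\circ f^n)\,d\mu=\int_{M/\F}\Bigl(\int_l \psi|_l\,(\varphi|_{f^n(l)}\circ f_{n,l})\,d\mu_l\Bigr)d\hat\mu(l).
\]
Inside the inner integral, add and subtract $\tilde\psi(l)\int_{f^n(l)}\varphi\,d\mu_{f^n(l)}$. Note that $\int_{f^n(l)}\varphi\,d\mu_{f^n(l)}=\tilde\varphi(F^n l)$. This splits the correlation as $I_1+I_2$, where
\[
I_1=\int_{M/\F}\Bigl(\int_l \psi\,(\varphi\circ f_{n,l})\,d\mu_l-\tilde\psi(l)\,\tilde\varphi(F^n l)\Bigr)d\hat\mu(l)
\]
and
\[
I_2=\int_{M/\F}\tilde\psi\,(\tilde\varphi\circ F^n)\,d\hat\mu-\int_M\psi\,d\mu\int_M\varphi\,d\mu .
\]
Since $\int_M\psi\,d\mu=\int\tilde\psi\,d\hat\mu$ and likewise for $\varphi$, the term $I_2$ is exactly the vertical correlation. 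Hence $|I_2|\le C_{\psi,\varphi}\gamma_n$.

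For $I_1$, the integrand is bounded in absolute value by the horizontal decay estimate for $\hat\mu$-a.e. $l$. This gives $|I_1|\le C_{\psi,\varphi}\delta_n$, because $\hat\mu$ is a probability measure. Summing, we obtain $|\Cor_\mu(\psi,\varphi,f^n)|\le 2C_{\psi,\varphi}\max\{\gamma_n,\delta_n\}$, and we absorb the factor $2$ into the constant.

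The main obstacle is the uniformity of the constant in the horizontal decay hypothesis. The definition lets $C_{\psi,\varphi}$ depend only on $\psi,\varphi$, not on $l$, and I will read it that way; this is precisely what allows integration over $l$. If the constant were leaf-dependent, we would instead need it to be $\hat\mu$-integrable. A secondary point is measurability of the maps $l\mapsto\tilde\psi(l)$ and $l\mapsto\int_l\psi\,(\varphi\circ f_{n,l})\,d\mu_l$, together with the equivariance $(f_{n,l})_*\mu_l=\mu_{f^n(l)}$. Both follow from the measurability of the disintegration and its essential uniqueness, applied to the invariant measure $\mu$ (assumed $f$-invariant, as is implicit in discussing correlations). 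Restriction of $\psi,\varphi$ to the full-measure regular set poses no issue, since they are bounded.
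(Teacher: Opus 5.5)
Your proof is correct and follows the paper's argument. You disintegrate, add and subtract $\int_{M/\F}\tilde\psi\,(\tilde\varphi\circ F^n)\,d\hat\mu$, bound the leafwise term by the leaf-uniform horizontal estimate and the remainder by the vertical estimate, then absorb the factor $2$. The $f$-invariance and the equivariance $(f_{n,l})_*\mu_l=\mu_{f^n(l)}$ you invoke are not actually needed: $\int_{f^n(l)}\varphi\,d\mu_{f^n(l)}=\tilde\varphi(F^n(l))$ holds by the definition of $\tilde\varphi$, and the rest is just the disintegration identity applied to $\psi\cdot(\varphi\circ f^n)$, $\psi$ and $\varphi$.
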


\subsection{Exponential decay for relatively Anosov}\label{sec:mixingrelano}
\begin{proposition}\label{prop:expanosov}
    Every relatively Anosov homeomorphism has exponential decay of correlations for H\"older observables.
\end{proposition}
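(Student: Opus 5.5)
The plan is to deduce Proposition~\ref{prop:expanosov} from the dichotomy of Proposition~\ref{prop:dichotomy}. We take $\mu$ to be the smooth volume on $M$ and $\mathcal H$ the class of $\alpha$-H\"older observables. Since $K$ has codimension at least $2$, it has $\mu$-measure zero, so the hypothesis of Proposition~\ref{prop:dichotomy} on the singular set holds. Disintegrating $\mu$ along the compact leaves of $\F$ on $M\setminus K$ gives a quotient measure $\hat\mu$ on $\mathbb{T}^{n-2}\setminus\{p\}$ and leaf measures $\mu_l$. It then suffices to establish vertical decay with rate $\gamma_n=e^{-cn}$ and horizontal decay with rate $\delta_n=e^{-c'n}$. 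Proposition~\ref{prop:dichotomy} then yields the bound $C_{\psi,\varphi}e^{-\min(c,c')n}$.

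\emph{Horizontal decay.} Fix a leaf $l$. Condition (3) in the definition of a relatively Anosov homeomorphism lets us rewrite the leafwise correlation. Setting $u=\psi|_l\circ h_{0,l}^{-1}$ and $v_n=\varphi|_{f^n(l)}\circ h_{n,l}^{-1}$, we get $h_{n,l}\circ f^n|_l = A^n\circ h_{0,l}$. Since each $h_{n,l}$ is volume preserving, the pushforward of the normalized $\mu_l$ by $h_{0,l}$ is the normalized area on $\Sigma$, provided $\mu_l$ is the normalized leaf volume up to a bounded density. The leafwise correlation therefore becomes $\int_\Sigma u\cdot(v_n\circ A^n)\,d\mathrm{area}-\int u\int v_n$.

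The uniform bounds $\|h_{n,l}^{\pm1}\|_{C^1}\le L$ give $\|u\|_{C^\alpha},\|v_n\|_{C^\alpha}\le L^\alpha(\|\psi\|_{C^\alpha}+\|\varphi\|_{C^\alpha})$, uniformly in $n$ and $l$. The exponential mixing of $A$ for H\"older observables has a constant of the form $C\|u\|_{C^\alpha}\|v\|_{C^\alpha}$. Because $v_n$ changes with $n$, we need this bilinear norm form rather than merely pointwise-in-observable constants. With it we get $\delta_n=e^{-c'n}$, with a constant independent of $l$.

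One subtlety is that the disintegration density of $\mu$ along leaves need not be the leaf area. I would handle this by absorbing the density $\rho_l$ into $\psi$, giving $\psi\rho_l$. This requires $\rho_l$ to be H\"older, and at worst one normalizes by choosing $\mu$ adapted to the foliation in the construction of Proposition~\ref{prop:relanosov}.

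\emph{Vertical decay.} The quotient map is the linear Anosov map $F$, and $\hat\mu$ is Lebesgue, or has a density w.r.t.\ Lebesgue. We must show that $\tilde\psi(l)=\int_l\psi\,d\mu_l$ is H\"older on $\mathbb{T}^{n-2}$, including near $p$, where leaves degenerate onto $K$. Away from $p$, the smoothness of $\F$ gives H\"older (indeed $C^1$) dependence.

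Near $p$, the estimate is the \textbf{main obstacle}. If H\"older regularity fails at $p$, I would first try to show that $\tilde\psi$ extends continuously to $p$ with a H\"older modulus $|\tilde\psi(x)-\tilde\psi(p)|\lesssim d(x,p)^{\beta}$. This would come from the fact that leaves near $p$ lie in a small neighborhood of $K$, so the leaf averages converge to a fixed average over the limiting set. If that also fails, the fallback is to truncate. Write $\tilde\psi=\tilde\psi\chi_\varepsilon+\tilde\psi(1-\chi_\varepsilon)$ with a cutoff $\chi_\varepsilon$ near $p$ whose H\"older norm is $\lesssim\varepsilon^{-1}$. Apply exponential mixing of $F$ to the smooth part, and bound the singular part by $\|\psi\|_\infty\|\varphi\|_\infty\hat\mu(B_\varepsilon(p))\lesssim\varepsilon^{n-2}$. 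Choosing $\varepsilon=e^{-\kappa n}$ with small $\kappa$ still gives exponential decay $\gamma_n$.

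Combining the two rates via Proposition~\ref{prop:dichotomy} concludes the proof.
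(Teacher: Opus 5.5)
Your core argument matches the paper's proof. You apply Proposition \ref{prop:dichotomy}, take vertical decay from the linear Anosov map $F$, and get horizontal decay by conjugating each leaf cycle to $A^n$ via the $h_{n,l}$ and using the bilinear H\"older form of exponential mixing for $A$, with constants uniform in $l$ and $n$ because of $L$. The paper's proof simply assumes the two points you flag: that $h_{0,l}$ carries $\mu_l$ to area, and that $\tilde\psi$ is H\"older on $\mathbb{T}^{n-2}$. However, two of the repairs you propose would not work as written.

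\textbf{The choice of measure.} You cannot start from the smooth volume and absorb a density. Correlations can only decay with respect to an $f$-invariant measure. The measure a relatively Anosov map preserves is $\mu=\int\nu_b\,d\mathrm{Leb}(b)$, with $\nu_b$ the normalized leaf volumes (cf.\ Proposition \ref{prop:singular-foliation-measure} and Remark \ref{rem:normalized-leafwise-volume}).
\begin{itemize}
\item Suppose $\hat\mu=h\,d\mathrm{Leb}$ with $h$ non-constant. Since $F$ is mixing for Lebesgue, $\int\tilde\psi\,(\tilde\varphi\circ F^n)\,d\hat\mu\to\int\tilde\psi\,d\hat\mu\int\tilde\varphi\,d\mathrm{Leb}$. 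In general this differs from $\int\tilde\psi\,d\hat\mu\int\tilde\varphi\,d\hat\mu$, so vertical decay fails.
\item Replacing $\psi$ by $\psi\rho_l$ produces $\int_l\psi\,d\mu_l\int_{f^n(l)}\varphi\,d\nu_{f^n(l)}$ rather than the required $\int_l\psi\,d\mu_l\int_{f^n(l)}\varphi\,d\mu_{f^n(l)}$. The density on the target leaf is never absorbed.
\end{itemize}
So your ``fallback'' is not optional; it is forced. With it, $\rho_l\equiv1$ and $\hat\mu=\mathrm{Leb}$, which is exactly what the paper's displayed leafwise identity uses.

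\textbf{The truncation near $p$.} This step needs more than $\|\chi_\varepsilon\|_{C^\beta}\lesssim\varepsilon^{-1}$. You must also control the H\"older constant of $\tilde\psi$ (and of $\tilde\varphi$) on $\{d(b,p)\ge\varepsilon\}$. The choice $\varepsilon=e^{-\kappa n}$ only works if that constant grows at most polynomially in $\varepsilon^{-1}$.

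Smoothness of $\F$ on $M\setminus K$ gives local H\"older bounds, but no rate as the leaves degenerate onto $K$. If the constant grows like $e^{1/\varepsilon}$, say, then optimizing $\varepsilon$ yields only a polynomial bound. The bare definition of a singular toral manifold gives no quantitative control of this degeneration, so this step is not justified in general.

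The control is available in the model of Theorem \ref{thm:main2}. There the leaf over $b$ near $q$ is the $\Theta$-preimage of the slice over $\chi^{-1}(b)$, and $\Xi^{-1}(y)=|y|^{-2/3}y$ near $0$ is $\tfrac13$-H\"older. So, with leaf measures varying smoothly over $D$, $\tilde\psi$ is globally $\tfrac{\alpha}{3}$-H\"older and your first route works without truncation. In the general setting of the statement, neither your argument nor the paper's (which just asserts that $\tilde\psi$ is H\"older) closes this point.
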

The idea behind the proof of Proposition \ref{prop:expanosov} is a general control of mixing rates of a system preserving an invariant foliation by the rates of mixing of the leaves.

\begin{proof}[Proof of Theorem \ref{thm:main}]
Follows immediately from Proposition \ref{prop:expanosov} and Proposition \ref{prop:relanosov}.
\end{proof}

\section{Proof of Proposition \ref{prop:relanosov}}

\subsection{Existence of a relatively Anosov homeomorphism}

We will now prove Proposition \ref{prop:relanosov}.  
We first need a uniform family of leafwise coordinates.
We can assume that $M$ is equipped with a singular toral foliation $\mathcal{F}$ such that $K$ is a $C^1$ compact embedded submanifold of $M$ of codimension at least $1$. Denote by $\pi:M\to\mathbb{T}^{n-2}$ the canonical projection to the quotient by the singular foliation and let $\{p\}=\pi(K)$. Let $F:\mathbb{T}^{n-2}\to\mathbb{T}^{n-2}$ be a smooth Anosov diffeomorphism which preserves a volume measure which is the pushforward of some Riemannian volume on $M$ by $\pi$ and whose unique fixed point is $p$.

For every leaf $l\in\mathcal{F}$ denote by
\[
l_n=\pi^{-1}(F^n([l]))\ \text{and}\ \Gamma_l=\bigcup\limits_{n\geq 0}l_n,
\]
where $[l]$ is the representative of the leaf $l$ in the quotient.

\subsection{Construction of a $C^0$ lift $f$ of $F$ on $\Gamma_l$}
Let $\mu$ be a volume measure on $M$ (induced by a metric on $M\setminus K$) of volume $1$ whose conditional measures are volume measures on the leaves. Let $l\in\mathcal{F}$ and let
\[
A_l:l\to l
\]
be a volume preserving pseudo-Anosov homeomorphism of $l$. Choose a family of Lipschitz volume preserving homeomorphisms $(h_{n})_{n\geq0}$ with
\[
h_{n}:l_n\longrightarrow l .
\]
Now define $f|_l = h_{1}^{-1}\circ A_l$ (taking $h_{0}= \mathrm{id}_l$) and for every $n\in\mathbb{N}$ let $f|_{l_n}:l_n\to l_{n+1}$ be
\[
f|_{l_n}=h_{n+1}^{-1}\circ A_l\circ h_{n}.
\]

The family of leaf homeomorphisms $f|_{l_n}$ constructed above defines a bijection
\[
f_l:\Gamma_l\to\Gamma_l
\]
which is continuous on the leaves, but it is not clear whether it is continuous on $\Gamma_l$ if, for example, there is an accumulation of leaves. However we can fix this by introducing the following distance and proving the lemma below.

\begin{definition}
Let $M$ be a closed manifold equipped with a Riemannian metric inducing a distance $d$ on $M$. 
Let $l_1,l_2,l_3,l_4 \subset M$ be mutually homeomorphic submanifolds, and let 
$g_1 : l_1 \to l_2$ and $g_2 : l_3 \to l_4$ be homeomorphisms. We define
\[
d_{0,M}(g_1,g_2)
= \sup_{(x,y)\in l_1 \times l_3} 
\big| d\big(g_1(x),g_2(y)\big) - d(x,y) \big|.
\]
\end{definition}
This quantity measures, in a sense, how the images of couples $(x_1,x_2)$ under $g_1,g_2$ are torn apart. It does not make sense if the leaves are not close to each other, but it is a powerful quantity as we will see shortly.

\begin{definition}
    Let $(h_{n})_{n\geq 0}$ be a family of Lipschitz volume preserving homeomorphisms $h_n:l_n\to l$. We say that this family is \emph{adapted} if for every subsequence $(n_k)_{k\in\mathbb{N}}$ and every index $s$ such that
    \[
    \mathcal{D}(l_{n_k},l_s)\longrightarrow0,
    \]
    where $\mathcal{D}$ denotes the Hausdorff distance induced by an arbitrary Riemannian metric on $M$, we have
    \[
    d_{0,M}(h_{n_k},h_{s})\longrightarrow 0
    \]
    as $k\to\infty$.
\end{definition}

\begin{lemma}\label{finerchoice}
    There is always a choice of an adapted family $(h_{n})_{n\geq 0}$.
\end{lemma}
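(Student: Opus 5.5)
The idea is to build the family $(h_n)$ from one global, continuous choice of leafwise coordinates, rather than choosing the maps leaf by leaf. The definition of adapted only compares $h_{n_k}$ with $h_s$ when the leaves $l_{n_k}$ converge in Hausdorff distance to $l_s$. So it is enough to construct a map
\[
l'\longmapsto H_{l'}:l'\to l,
\]
defined for every regular leaf $l'$ of $\mathcal F$, which is \emph{continuous in the leaf}, and then to set $h_n:=H_{l_n}$.

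The first step is to construct this map. On $M\setminus K$ the foliation is smooth with compact leaves, all diffeomorphic to $\Sigma$, and the quotient is the manifold $\mathbb{T}^{n-2}\setminus\{p\}$. By Ehresmann/Reeb stability, $\pi:M\setminus K\to\mathbb{T}^{n-2}\setminus\{p\}$ is then a smooth fibre bundle with fibre $\Sigma$. Over a contractible open set $U$ containing every point except a neighbourhood of $p$ and a cut locus, the bundle trivializes, giving a smooth family of diffeomorphisms $\Phi_q:\pi^{-1}(q)\to l$ depending smoothly on $q\in U$.

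The second step is to make these maps volume preserving. The conditional measures $\mu_{l'}$ are smooth area forms of the same total mass, so Moser's trick gives a correction $\Psi_q$ with $\Psi_q\circ\Phi_q$ volume preserving. Because Moser's construction depends smoothly on parameters, the corrected family still depends continuously (indeed $C^1$) on $q$. It is also Lipschitz, uniformly on compact subsets of $U$.

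The third step deduces adaptedness from continuity. Hausdorff convergence $l_{n_k}\to l_s$ means $F^{n_k}([l])\to F^s([l])$ in the quotient. Since $l_s$ is a regular leaf, the point $F^s([l])$ is not $p$. If $F^s([l])\in U$, continuity of $q\mapsto H_q$ in the $C^0$ topology, measured through the product structure, gives
\[
\sup_{x}\,d\bigl(h_{n_k}(x),h_s(\phi_k(x))\bigr)\to0,
\]
where $\phi_k:l_{n_k}\to l_s$ is the holonomy identification. We then need to bound $d_{0,M}(h_{n_k},h_s)$, the supremum over pairs $(x,y)$ of $|d(h_{n_k}(x),h_s(y))-d(x,y)|$. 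Splitting this through the point $\phi_k(x)$ reduces the bound to two facts: $d(x,\phi_k(x))\to0$, and $h_s$ is Lipschitz. This is where the argument becomes delicate. As literally defined, $d_{0,M}$ is small only if $h_s$ is nearly an isometry, so I would read adaptedness as controlling the quantity relative to the holonomy, which is the only thing used later for continuity of $f_l$. I would state this interpretation explicitly.

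The main obstacle is that $\mathbb{T}^{n-2}\setminus\{p\}$ is not contractible, so a global continuous section $q\mapsto H_q$ may not exist, since the bundle can have monodromy. To get around this, I would use the fact that the orbit $\{F^n([l])\}$ is countable. Its accumulation points form a closed $F$-invariant set, and I would choose the cut locus (a codimension-one CW skeleton through which trivializations change) to avoid all limit points of the orbit that lie in the regular part. If that is impossible, the fallback is to change $H_q$ near the cut by a holonomy correction that tends to the identity along convergent subsequences. The plan is to do this inductively in $n$: when $l_n$ is defined, pick $h_n$ agreeing up to $2^{-n}$ with the transport of $h_m$ for every earlier $m$ whose leaf is within distance $2^{-m}$. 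This is a diagonal construction, and its consistency is the delicate point.

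Near $K$ no condition is required, because $l_s$ is always regular.
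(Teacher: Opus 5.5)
Your route is genuinely different from the paper's. Your reinterpretation of adaptedness is a necessary correction rather than a weakness, because it sits exactly where the paper's own proof fails. Suppose $\epsilon_k:=\mathcal{D}(l_{n_k},l_s)\to 0$ and $\eta_k:=d_{0,M}(h_{n_k},h_s)\to 0$. For $y,y'\in l_s$ pick $x_k\in l_{n_k}$ with $d(x_k,y')\le\epsilon_k$. Testing $d_{0,M}$ on the pairs $(x_k,y')$ and $(x_k,y)$ gives $|d(h_s(y'),h_s(y))-d(y',y)|\le 2\eta_k+2\epsilon_k$. So every limit map $h_s$ would have to preserve restricted ambient distances. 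That is impossible for a general metric; along a dense orbit it would force all leaves to have the same $d$-diameter.

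The paper takes arbitrary $h_n$, replaces $h_{n_k}$ by $h_s\circ\psi_k^{-1}$, and asserts that $|d(h_s(\psi_k^{-1}x),h_s(y))-d(\psi_k^{-1}x,y)|\to 0$ uniformly. But the supremum of this term over $x\in l_{n_k}$, $y\in l_s$ is $\sup_{z,y\in l_s}|d(h_s(z),h_s(y))-d(z,y)|$, which does not depend on $k$. The paper also makes the replacement one subsequence at a time and never checks compatibility: a given $h_n$ belongs to subsequences converging to different $l_s$, and $h_s$ may itself be replaced. This is precisely the consistency problem you isolate. A single Moser-corrected trivialization, with adaptedness measured through the holonomy maps $\phi_k$, is the right repair. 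It is also all that the continuity argument for $f_l$ uses, including for the inverses, since uniformly convergent homeomorphisms of compact surfaces have uniformly convergent inverses.

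The genuine gap is your treatment of monodromy. Choosing the cut locus to avoid all limit points of the orbit is impossible in the case that matters. The paper applies the construction to $l_\star$, whose orbit is dense, so every point of $\mathbb{T}^{n-2}\setminus\{p\}$ is a limit point while any cut is nonempty.

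The diagonal fallback is not carried out and, as set up, need not close. If $l_n$ is close to both $l_m$ and $l_{m'}$ with $m<m'$, the two requirements at precision $2^{-n}$ are compatible only if $h_{m'}$ is already roughly $2^{-n}$-close to the transport of $h_m$. But $h_{m'}$ was constrained at most to precision $2^{-m'}$, and not at all if $l_{m'}$ was not within $2^{-m}$ of $l_m$.

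Finally, excising a neighbourhood of $p$ from $U$ leaves the orbit leaves over that neighbourhood uncovered. Regularity of $l_s$ only gives $F^s([l])\neq p$, not $F^s([l])\in U$.

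The missing observation is that adaptedness only tests limits $l_s$ that are themselves orbit leaves. So continuity of $q\mapsto H_q$ is needed only at the points $q_j:=F^j([l])$, $j\ge 0$, of the countable set $\mathcal{O}$, not at its accumulation points. Countable metric spaces are zero-dimensional, which handles this directly:
\begin{itemize}
\item Take balls $V_j$ centred at $q_j$, small enough to be trivializing and to miss $p$, whose radii avoid the countably many values $d(q_j,q)$, $q\in\mathcal{O}$.
\item The sets $W_j:=(V_j\cap\mathcal{O})\setminus\bigcup_{i<j}V_i$ are then pairwise disjoint, relatively clopen in $\mathcal{O}$, and cover $\mathcal{O}$.
\item Use the Moser-corrected trivialization over $V_j$ for the leaves over $W_j$.
\end{itemize}
If $F^{n_k}([l])\to F^s([l])\in W_j$, then $F^{n_k}([l])\in W_j$ for all large $k$, and your third step applies verbatim. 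The diagonal construction becomes unnecessary.

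Alternatively, your cut-locus idea works once it targets $\mathcal{O}$ itself. Push the spine $\bigcup_i\{x_i=p_i\}$ by the flow of $\beta\,(1,\dots,1)$, with $\beta\ge 0$ vanishing only at $p$. For all but countably many times the image misses $\mathcal{O}$, and you can trivialize over the whole complementary open cell without removing a neighbourhood of $p$.

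Note that neither fix gives anything uniform, either at non-orbit accumulation points or in the Lipschitz constants. Those are needed later in the paper, where monodromy is a real issue, but they are not part of this lemma.
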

\begin{proof}
Fix a reference leaf $l$ and, for each $n$, let $l_n$ be a leaf. Choose arbitrary Lipschitz volume-preserving homeomorphisms
\[
h_{n} : l_n \to l .
\]
Let $(l_{n_k})$ be any sequence of leaves converging in the Hausdorff distance to a leaf $l_s$. Since $M$ is compact, such convergent subsequences exist. Because $l_{n_k}\to l_s$, for sufficiently large $k$ the leaves $l_{n_k}$ and $l_s$ lie in the same foliated chart and $l_{n_k}$ can be written as a graph over $l_s$. Hence there exist homeomorphisms
\[
\psi_k : l_s \to l_{n_k}
\]
which converge uniformly to the identity map of $l_s$ (in the sense that $\sup_{z\in l_s} d(\psi_k(z),z) \to 0$). Moreover, we can choose the $\psi_k$ to be uniformly Lipschitz with a constant close to $1$.

Replace $h_{n_k}$ by $h_{s}\circ \psi_k^{-1}$ (which still maps $l_{n_k}\to l$). This modification does not affect the definition of an adapted family, since we are free to choose the maps $h_n$. With this choice we show
\[
d_{0,M}(h_{n_k},h_{s}) \to 0 .
\]
By definition,
\[
d_{0,M}(h_{n_k},h_{s})
= \sup_{x\in l_{n_k},\, y\in l_s} 
\big| d\big( h_{s}(\psi_k^{-1}(x)),\, h_{s}(y) \big) - d(x,y) \big|.
\]
Insert and subtract $d(\psi_k^{-1}(x),y)$:
\begin{align*}
&\big| d(h_{s}(\psi_k^{-1}(x)), h_{s}(y)) - d(x,y) \big| \\
&\le 
\big| d(h_{s}(\psi_k^{-1}(x)), h_{s}(y)) - d(\psi_k^{-1}(x),y) \big| \\
&\quad + \big| d(\psi_k^{-1}(x),y) - d(x,y) \big|.
\end{align*}
Because $h_s$ is Lipschitz, the first term is bounded by $(\operatorname{Lip}(h_s)-1)\,d(\psi_k^{-1}(x),y)$ (or by some uniform constant times the distortion), which tends to $0$ uniformly as $\psi_k^{-1}\to \mathrm{id}$. The second term also tends to $0$ because $\psi_k^{-1}\to \mathrm{id}$. Taking the supremum and letting $k\to\infty$ yields $d_{0,M}(h_{n_k},h_s)\to 0$. Thus the family $(h_n)$ is adapted.
\end{proof}

\begin{corollary}
    Under the assumption of Lemma \ref{finerchoice} the map $f:\Gamma_l\to \Gamma_l$ is a homeomorphism.
\end{corollary}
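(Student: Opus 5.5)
The plan is to show that $f:\Gamma_l\to\Gamma_l$ is a bijection, continuous, and with continuous inverse. Here $\Gamma_l$ carries the topology induced from $M$.

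\emph{Bijectivity.} Each restriction $f|_{l_n}=h_{n+1}^{-1}\circ A_l\circ h_n$ is a homeomorphism $l_n\to l_{n+1}$. The leaves $l_n$ are pairwise disjoint. If $F^n([l])=F^m([l])$ for some $n\neq m$, the orbit is periodic and the family of maps is well defined after identifying indices, which I will assume is built into the choice of $(h_n)$. So $f$ is a bijection of $\bigcup_n l_n$ onto $\bigcup_{n\ge1}l_n$. On the leaf $l$ itself I would use $h_0=\mathrm{id}$, so that $f$ on $\Gamma_l$ is the concatenation of these leafwise maps.

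\emph{Continuity.} Continuity inside a single leaf is clear. The issue is sequences $x_k\in l_{n_k}$ converging to $x\in l_s$ with $n_k\neq s$. Passing to subsequences, the leaves of the compact foliation near $l_s$ are graphs over $l_s$, so $\mathcal D(l_{n_k},l_s)\to0$. By continuity of $F$ on the quotient, also $\mathcal D(l_{n_k+1},l_{s+1})\to0$. I would then estimate
\[
d\bigl(f(x_k),f(x)\bigr)=d\bigl(h_{n_k+1}^{-1}A_lh_{n_k}(x_k),\,h_{s+1}^{-1}A_lh_s(x)\bigr).
\]
The adapted property, Lemma \ref{finerchoice}, gives $d_{0,M}(h_{n_k},h_s)\to0$. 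Applied with $y=x$, this yields $|d(h_{n_k}(x_k),h_s(x))-d(x_k,x)|\to0$, hence $h_{n_k}(x_k)\to h_s(x)$ in $l$. Since $A_l$ is continuous on $l$, $A_lh_{n_k}(x_k)\to A_lh_s(x)$. For the last factor, apply the adapted property to the pair $(n_k+1,s+1)$ in its inverse form. Writing $z_k=A_lh_{n_k}(x_k)$, $z=A_lh_s(x)$, $u_k=h_{n_k+1}^{-1}(z_k)$ and $u=h_{s+1}^{-1}(z)$, we get
\[
|d(z_k,z)-d(u_k,u)|=|d(h_{n_k+1}(u_k),h_{s+1}(u))-d(u_k,u)|\le d_{0,M}(h_{n_k+1},h_{s+1})\to0,
\]
so $u_k\to u$. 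Every subsequence has a further subsequence converging to $f(x)$, which gives continuity of $f$.

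\emph{Continuity of the inverse.} The same argument applies to $f^{-1}|_{l_{n+1}}=h_n^{-1}A_l^{-1}h_{n+1}$, using continuity of $F^{-1}$ and $A_l^{-1}$.

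\emph{Main obstacle.} The hardest step is the one where the adapted property is used with shifted indices and for limit leaves that lie in $\Gamma_l$. Lemma \ref{finerchoice} is stated for a single subsequence. I would need the modification of $h_{n_k}$ to be consistent simultaneously for all accumulation leaves $l_s$, and for the shifted family $h_{n_k+1}$. My first attempt would be a diagonal construction: enumerate the pairs $(s,\text{nearby }n)$, and define $h_n$ inductively via the graph maps $\psi$ to the nearest earlier leaf $l_s$ at scale $\varepsilon_s\to0$. I would then check that the $d_{0,M}$ errors telescope, so that adaptedness holds for every limit at once.
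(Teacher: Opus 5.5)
Your proposal is correct and follows the paper's proof step for step. Both use sequential continuity and apply adaptedness at $(n_k,s)$ to get $h_{n_k}(x_k)\to h_s(x)$, then continuity of $A_l$, then adaptedness at the shifted pair $(n_k+1,s+1)$ read through the inverses, and finally the symmetric argument for $f^{-1}$. You are more explicit than the paper on two points: you derive $\mathcal D(l_{n_k+1},l_{s+1})\to0$ from continuity of $F$, and you show why $d_{0,M}$ also controls the inverses.

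Your ``main obstacle'' does not arise for this corollary. The definition of an adapted family already quantifies over every subsequence and every index $s$, including shifted ones. So the diagonal construction belongs to Lemma \ref{finerchoice}, not here. As you noticed, both arguments actually give a homeomorphism of $\Gamma_l$ onto $\bigcup_{n\ge1}l_n$, not onto $\Gamma_l$, unless the orbit of $[l]$ is periodic.
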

\begin{proof}
    Let $(x_m)_m$ be a sequence of points of $\Gamma_l$ converging to $x_{\star}\in\Gamma_l$ (where $\star$ is an integer index indicating the leaf containing the limit). Define a leaf projection $r:\mathbb{N}\to\mathbb{N}$ such that $x_m\in l_{r(m)}$. By definition,
    \[
    f(x_m)=h_{r(m)+1}^{-1}\circ A_l\circ h_{r(m)}(x_m).
    \]
    Since $x_m\to x_{\star}$, we have $l_{r(m)}\to l_{\star}$ in Hausdorff distance. By Lemma \ref{finerchoice} this implies
    \[
    h_{r(m)}(x_m)\to h_{\star}(x_\star),
    \]
    and consequently
    \[
    A_l\circ h_{r(m)}(x_m)\to A_l(h_{\star}(x_\star)).
    \]
    Applying Lemma \ref{finerchoice} again (to the inverses) gives
    \[
    h_{r(m)+1}^{-1}\bigl(A_l\circ h_{r(m)}(x_m)\bigr)\to h_{\star+1}^{-1}\bigl(A_l\circ h_{\star}(x_\star)\bigr)=f(x_\star),
    \]
    which proves continuity. Because $\Gamma_l$ may not be compact, continuity of the inverse follows by the same argument applied to $f^{-1}$, which has an identical structure. Hence $f$ is a homeomorphism.
\end{proof}

\subsection{Existence of a bi-Lipschitz choice of the adapted family $(h_{n})_{n\geq 0}$}

For our construction of a relatively Anosov homeomorphism we will need a finer property for the adapted family of homeomorphisms $(h_{n})_{n\geq 0}$, namely the existence of a bi-Lipschitz one.

\begin{proposition}\label{prop:adaptedchoicelip}
     There exists a bi-Lipschitz family of adapted volume preserving homeomorphisms $(h_{n})_{n\geq 0}$.
\end{proposition}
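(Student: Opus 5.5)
We want an adapted family $(h_n)_{n\ge0}$, $h_n:l_n\to l$, of volume preserving maps whose Lipschitz constants and those of their inverses are bounded by one constant $L$ independent of $n$. The plan is to build the $h_n$ from a single smooth trivialization of the foliation over a compact subset of the quotient, then correct the volume with Moser's trick. The correction is controlled by the $C^1$ geometry of the trivialization, so the bounds are uniform. The adaptedness argument of Lemma \ref{finerchoice} will then be rerun so that it no longer spoils the Lipschitz bound.

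\textbf{Step 1 (uniform trivialization).} Write $B=\mathbb{T}^{n-2}\setminus\{p\}$. Since $\F$ is a smooth foliation by compact leaves with trivial holonomy on $M\setminus K$ and quotient $B$, the Ehresmann/Reeb stability argument makes $\pi:M\setminus K\to B$ a smooth locally trivial fibre bundle with fibre $\Sigma$.

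We would like to restrict to a compact set $B_\varepsilon=B\setminus U_\varepsilon(p)$. The orbit $F^n([l])$ does accumulate at $p$ whenever $[l]$ lies on a stable or unstable manifold of $p$. To handle this we will either choose $l$ with dense orbit avoiding $W^s(p)\cup W^u(p)$, or, more robustly, allow the $h_n$ to degenerate only near $K$. The second option is compatible with the later requirement, since $f$ only needs to be $C^1$ off $K$ and a homeomorphism overall.

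Over a finite cover of $B_\varepsilon$ by contractible charts, fix trivializations $\Phi_i:\pi^{-1}(U_i)\to U_i\times\Sigma$. On overlaps, the transition maps $U_i\cap U_j\to\Diff(\Sigma)$ take values in a compact family, so all $C^1$ norms are bounded by a constant $L_0(\varepsilon)$.

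\textbf{Step 2 (volume correction).} For a leaf $l'$, the map $\Phi_i|_{l'}$ composed with a fixed identification $\Sigma\cong l$ is $L_0$-bi-Lipschitz. It does not preserve the normalized volumes, but the two area forms are smooth and vary continuously with $[l']$, and both have total mass $1$. Moser's theorem then gives an isotopy, depending continuously on $[l']$ in $C^1$, that corrects the pullback area form. Its $C^1$ norm is bounded uniformly on the compact set $\overline{B_\varepsilon}$. This produces volume preserving $h_n$ with $\|h_n\|_{C^1},\|h_n^{-1}\|_{C^1}\le L$.

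\textbf{Step 3 (adaptedness).} We will not define $h_{n_k}=h_s\circ\psi_k^{-1}$ separately along every subsequence, since these choices might be inconsistent. Instead, all $h_n$ are defined as the restriction of one globally continuous (in $[l']$) family $H_{[l']}$ obtained by gluing the chart maps with a partition of unity in the base. This gluing is possible because $\Diff_{\vol}(\Sigma)$ is locally contractible, so convex-type interpolation followed by a Moser correction stays volume preserving.

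Continuity of $[l']\mapsto H_{[l']}$ in $C^0$ gives $d_{0,M}(h_{n_k},h_s)\to0$ whenever $l_{n_k}\to l_s$, by the same triangle inequality used in the proof of Lemma \ref{finerchoice}. The uniform $C^1$ bound survives because only finitely many charts are involved.

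\textbf{Main obstacle.} The hardest step is the global gluing into a single continuous volume preserving family. A bundle over $B$ need not be trivial, so $H$ cannot be globally defined as a map to a fixed $\Sigma$ that is continuous in the base. The monodromy obstructs this.

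I would first try to exploit the fact that only the countable set $\{F^n([l])\}$ matters, together with its closure. We need continuity only along convergent sequences in the orbit. By choosing charts along the orbit, and using that the orbit closure minus $p$ can be covered by finitely many charts at each scale, we glue only over simply connected pieces, on which the transition data is homotopically trivial.

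If monodromy still obstructs, the fallback is to weaken ``adapted'' to mean that $h_n$ converge up to a fixed element of the monodromy group. We would then compensate by choosing the pseudo-Anosov $A_l$ to commute with that finite data.
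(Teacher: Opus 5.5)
Your Steps 1--2 (finitely many bundle charts plus a parametric Moser correction) are a sound way to get uniform bounds over a compact part $B_\varepsilon$ of the leaf space, and more concrete than the paper's. The gap is that the proposition is about a \emph{single} constant $L$, and your proposal never controls the leaves near $K$. Neither of your escape routes works:
\begin{itemize}
\item The leaf actually needed is the leaf $l_\star$ with dense orbit used to build $f_\star$ (see Proposition~\ref{proofrelanosov}). Its orbit enters every neighbourhood of $p$, whether or not $[l]\in W^s(p)\cup W^u(p)$. Those sets are where orbits \emph{converge} to $p$ (forward or backward), not where they accumulate on it. So ``dense orbit'' and ``orbit inside $B_\varepsilon$'' are incompatible.
\item Letting the $h_n$ degenerate near $K$ gives up $L$. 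But $L$ is required by condition~3 of the definition of relatively Anosov, by the bound $\operatorname{Lip}(f)\le L^2\|A_l\|_{C^1}$, and by the leaf-uniform horizontal decay in the proof of Proposition~\ref{prop:expanosov}.
\end{itemize}
This is exactly the point the paper's proof targets, by passing to the metric $g_\star=r^{-2}g_0$ of Lemma~\ref{lem:nocollapse} to stop leaves from collapsing. Be aware that the point is delicate and the paper's sketch does not settle it either. The uniform upper diameter bound it invokes fails whenever leaves approach $K$ without collapsing (as in Theorem~\ref{thm:main2}), by the very estimate $\operatorname{diam}_g(L_i)\ge\log(1+c/r(x_i))$ in that lemma's proof. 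Conversely, for round $2$-spheres in $S^1\times B^3$ shrinking onto $K=S^1\times\{0\}$, no Riemannian metric on $M$ gives a uniform bound. Any correct argument has to use the local structure of $\F$ near $K$.

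Step~3 has two further problems.

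First, the gluing you call the main obstacle is left open, and the fallback (adaptedness modulo monodromy) changes the statement. Indeed, a continuous family over all of $B$ would trivialize $\pi$, which is impossible for the paper's own Dehn-twist examples (Corollary~\ref{cor:dehn}). You do not need one, though. Adaptedness only involves limit leaves $l_s$ \emph{inside} the orbit, so it suffices that $F^n([l])\mapsto h_n$ be continuous on the countable set $O=\{F^n([l])\}$ at points of $O$. A countable metric space is zero-dimensional: around each point, all but countably many radii give clopen balls. So $O$ splits into disjoint sets, clopen in $O$, each lying in a single chart. Defining $h_n$ chartwise via your Step~2 then gives continuity along every convergent subsequence, with no gluing and no monodromy.

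Second, $C^0$-continuity does not give $d_{0,M}(h_{n_k},h_s)\to0$. The triangle inequality you borrow from Lemma~\ref{finerchoice} leaves the term $|d(h_s(x'),h_s(y))-d(x',y)|$, which is independent of $k$ and vanishes only if $h_s$ preserves distances. In fact, suppose $l_{n_k}\to l_s$ and $d_{0,M}(h_{n_k},h_s)\to0$, and take $x_k\in l_{n_k}$ with $x_k\to y'\in l_s$. Then $d(h_s(y'),h_s(y))=\lim_k d(h_{n_k}(x_k),h_s(y))=\lim_k d(x_k,y)=d(y',y)$ for all $y,y'\in l_s$. So every such $h_s$ would have to be an isometry for the ambient distance, which Moser-corrected chart maps are not. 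What your construction (and that lemma's argument) really yields is $\sup_{z\in l_s}d\bigl(h_{n_k}(\psi_k(z)),h_s(z)\bigr)\to0$ for maps $\psi_k:l_s\to l_{n_k}$ converging to the identity. That is what the continuity argument for $f$ on $\Gamma_l$ actually uses, and you should state and prove adaptedness in that form.
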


In our assumptions so far, $\mathcal{F}$ is a singular foliation with compact leaves. When the singular set is non‑empty it may happen that leaves collapse in diameter to $0$ as they approach $K$. We first show that there is no loss of generality in assuming that this collapse does not happen.

\begin{lemma}\label{lem:nocollapse}
Let $M$ be a compact smooth manifold, $K \subset M$ a closed subset, and let $\mathcal{F}$ be a foliation of $M \setminus K$ with compact leaves. Then there exists a Riemannian metric $g$ on $M \setminus K$ and a constant $\varepsilon > 0$ such that
\[
\operatorname{diam}_g(L) \ge \varepsilon
\quad \text{for every leaf } L .
\]
\end{lemma}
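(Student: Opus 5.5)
The plan is to start from an arbitrary Riemannian metric $g_0$ on $M\setminus K$ and enlarge it by adding a locally finite sum of terms $d\phi_i\otimes d\phi_i$. Each $\phi_i$ is a function with compact support in $M\setminus K$ that is forced to vary by a definite amount along every plaque of a foliated chart. The key point is that the enlarged metric makes each $\phi_i$ $1$-Lipschitz for the induced path distance $d_g$ on $M\setminus K$. The variation of $\phi_i$ along a plaque then gives a lower bound on the $d_g$-distance between two points of the same leaf. I interpret $\operatorname{diam}_g(L)$ as the diameter of $L$ for the distance $d_g$ induced by $g$ on $M\setminus K$ (infimum of $g$-lengths of paths in $M\setminus K$). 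No extrinsic bound coming from a metric on all of $M$ can hold in general, since leaves may shrink to points of $K$. I also assume, as is implicit, that the leaves have positive dimension $p\ge 1$. No structure of the leaf space is used.

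\textbf{Step 1: a locally finite cover by foliated charts.} Let $p$ be the leaf dimension and $q$ the codimension. Since $M\setminus K$ is a second countable manifold, I can choose a locally finite (in $M\setminus K$) countable family of foliated charts
\[
\Phi_i:U_i\to (-1,1)^p\times(-1,1)^q ,
\]
each with $\overline{U_i}$ compact in $M\setminus K$. Plaques are the sets $\{y=\mathrm{const}\}$. I choose them so that the smaller open sets $V_i=\Phi_i^{-1}\bigl((-1,1)^p\times(-\tfrac12,\tfrac12)^q\bigr)$ still cover $M\setminus K$. This is the standard shrinking argument for foliated atlases: first take charts around every point with the point sent to the origin, then extract a locally finite refinement using an exhaustion of $M\setminus K$ by compact sets.

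\textbf{Step 2: the functions $\phi_i$ and the metric.} Fix a smooth cutoff $\chi:\mathbb R\to[0,1]$ equal to $1$ on $[-\tfrac34,\tfrac34]$ and supported in $(-\tfrac78,\tfrac78)$. Let $\beta$ be a smooth cutoff on $(-1,1)^{p-1}\times(-1,1)^q$ with compact support, equal to $1$ on $[-\tfrac34,\tfrac34]^{p-1}\times[-\tfrac12,\tfrac12]^q$. In chart coordinates $(x_1,\dots,x_p,y)$ set
\[
\phi_i=\chi(x_1)\,x_1\,\beta(x_2,\dots,x_p,y),
\]
and extend by $0$ outside $U_i$. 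Then $\phi_i$ is smooth with compact support in $U_i\subset M\setminus K$. By local finiteness,
\[
g=g_0+\sum_i d\phi_i\otimes d\phi_i
\]
is a smooth Riemannian metric on $M\setminus K$. For every $i$ and every tangent vector $v$ we have $|d\phi_i(v)|\le |v|_g$. Integrating along paths, for any path $\gamma$ in $M\setminus K$ from $x$ to $z$ we get $|\phi_i(x)-\phi_i(z)|\le \mathrm{length}_g(\gamma)$. Hence $d_g(x,z)\ge |\phi_i(x)-\phi_i(z)|$ for all $x,z\in M\setminus K$.

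\textbf{Step 3: the bound.} Let $L$ be any leaf and pick $x\in L$. Then $x\in V_i$ for some $i$, so $\Phi_i(x)=(x_1,\dots,x_p,y)$ with $|y_j|<\tfrac12$ for all $j$. The plaque of $L$ through $x$ contains the two points $a,b$ with coordinates $(\pm\tfrac34,0,\dots,0,y)$. At these points $\chi=1$ and $\beta=1$, so $\phi_i(a)=\tfrac34$ and $\phi_i(b)=-\tfrac34$. By Step 2, $d_g(a,b)\ge \tfrac32$, hence $\operatorname{diam}_g(L)\ge \tfrac32$, and we can take $\varepsilon=1$. The only real care is in Step 1: the sum defining $g$ must be locally finite in $M\setminus K$ so that $g$ is smooth. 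There the charts accumulate only toward $K$, which is allowed, and the estimate in Step 3 is uniform because it uses a single chart at a time and never sums contributions that could cancel.
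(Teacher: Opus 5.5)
Your proof is correct, and it takes a genuinely different route from the paper's.

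\textbf{The paper's route.} The paper uses the conformal blow-up $g=r^{-2}g_0$ with $r=d_{g_0}(\cdot,K)$ and argues by contradiction. A sequence of leaves with $\operatorname{diam}_g\to 0$ either stays a definite distance from $K$, which compactness and foliated charts rule out, or it approaches $K$. In the second case the paper proves $d_g(x_i,y)\ge\log\bigl(1+d_{g_0}(x_i,y)/r(x_i)\bigr)$ and combines it with a claimed uniform bound $d_{g_0}(x_i,y_i)\ge c$ for some $y_i\in L_i$.

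\textbf{Your route.} You build the metric from a locally finite foliated atlas by adding $\sum_i d\phi_i\otimes d\phi_i$. Each $\phi_i$ is then $1$-Lipschitz for $d_g$ while varying by $3/2$ across every plaque of $U_i$ that meets $V_i$. This gives the explicit bound $\operatorname{diam}_g(L)\ge 3/2$ for every leaf, for the ambient path distance and hence also for the intrinsic leaf distance. There is no contradiction argument and no use of compactness of the leaves.

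\textbf{What each buys.} Your argument does not care how leaves behave near $K$, and that is exactly where the paper's argument is weak. The paper asserts the uniform constant $c$ without proof, and it is false in general. For example, the Hopf circles foliating $S^4\setminus\{0,\infty\}$ have $g_0$-diameter tending to $0$ near $K$, which is the phenomenon you point out yourself. Also, $r$ is only Lipschitz, so $r^{-2}g_0$ must be smoothed before it is a Riemannian metric. In return, the paper's metric is canonical and conformal to $g_0$, with distortion controlled explicitly by the distance to $K$. Yours depends on the choice of atlas and may be much larger than $g_0$ near $K$. Neither metric gives the uniform upper bound on leaf diameters that the paper invokes later, but that is not part of this lemma.
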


\begin{proof}
Fix a smooth Riemannian metric $g_0$ on $M$ and set $r(x) = d_{g_0}(x,K)$. Then $r$ is continuous on $M$, smooth and positive on $M \setminus K$. Define $g = r^{-2} g_0$ on $M \setminus K$.

Suppose there exists a sequence of leaves $L_i$ with $\operatorname{diam}_g(L_i) \to 0$. If $r \ge \delta > 0$ on $L_i$ for all $i$, then $g \le \delta^{-2} g_0$ along $L_i$, hence $\operatorname{diam}_{g_0}(L_i) \to 0$. By compactness of $M$, this forces $L_i$ to converge in the Hausdorff sense to a point, which is incompatible with the fact that the leaves form a foliation by immersed submanifolds of positive dimension. Thus $\inf_{x \in L_i} r(x) \to 0$. Choose $x_i \in L_i$ with $r(x_i) = \min_{L_i} r$.

Let $y \in L_i$ and $\gamma$ a piecewise smooth curve in $L_i$ joining $x_i$ to $y$. Since $|\nabla r|_{g_0} \le 1$ almost everywhere, one has
\[
r(\gamma(t)) \le r(x_i) + \ell_{g_0}(\gamma|_{[0,t]}).
\]
It follows that
\[
\ell_g(\gamma)
= \int \frac{|\dot\gamma|_{g_0}}{r(\gamma)} 
\ge \int \frac{|\dot\gamma|_{g_0}}{r(x_i) + \ell_{g_0}(\gamma|_{[0,t]})}
= \int_0^{\ell_{g_0}(\gamma)} \frac{ds}{r(x_i)+s}
= \log\!\Big(1 + \frac{\ell_{g_0}(\gamma)}{r(x_i)}\Big).
\]
Taking the infimum over such $\gamma$ gives
\[
d_g(x_i,y) \ge \log\!\Big(1 + \frac{d_{g_0}(x_i,y)}{r(x_i)}\Big).
\]

Since each $L_i$ is a compact immersed submanifold of positive dimension, it cannot be contained in arbitrarily small $g_0$-balls; thus there exists $c>0$ and $y_i \in L_i$ with $d_{g_0}(x_i,y_i) \ge c$. Hence
\[
\operatorname{diam}_g(L_i)
\ge d_g(x_i,y_i)
\ge \log\!\Big(1 + \frac{c}{r(x_i)}\Big) \to \infty,
\]
a contradiction. The claim follows.
\end{proof}

\begin{proof}[Proof of Proposition \ref{prop:adaptedchoicelip}]
Let $(h_n)_{n\geq 0}$ be an adapted family and suppose that they are Lipschitz with respect to the distance induced by the Riemannian metric $g_\star$ from Lemma \ref{lem:nocollapse}. In general, the inverse of a Lipschitz homeomorphism on a compact metric space is also Lipschitz. The point is to ensure that the choice of the adapted family can be made uniformly bi-Lipschitz, with a constant $L\ge 1$. Because the metric $g_\star$ prevents the leaves from collapsing and also gives a uniform upper bound on their diameters, the volume‑preserving restriction does not force the norms of the differentials to explode; consequently the $h_n$ (and their inverses) can be chosen to be uniformly $L$‑bi‑Lipschitz.
\end{proof}

\subsection{$f$ is Lipschitz on each $\Gamma_l$ and its extension}
Now we prove that under a suitable choice of the sequence $(h_n)$, $f$ extends to a homeomorphism of $\overline{\Gamma_l}$. By Proposition \ref{prop:adaptedchoicelip} we can choose the $h_n$ not only to be $L$‑Lipschitz but also $L$‑bi‑Lipschitz (i.e., the inverses $h_n^{-1}$ are also $L$‑Lipschitz).

\begin{proposition}\label{extendingfromorbit}
    There is a homeomorphism $\tilde{f}:\overline{\Gamma_l}\to\overline{\Gamma_l}$ such that $\tilde{f}|_{\Gamma_l}=f$.
\end{proposition}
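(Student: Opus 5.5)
The approach is to show that $f$ is uniformly Lipschitz on $\Gamma_l$, extend it by uniform continuity to the closure, and then check that the extension is a bijection whose inverse is continuous.

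\textbf{Step 1: a uniform leafwise Lipschitz bound.} On each leaf $l_n$ we have $f|_{l_n}=h_{n+1}^{-1}\circ A_l\circ h_n$. By Proposition \ref{prop:adaptedchoicelip}, the maps $h_n$ and $h_n^{-1}$ are $L$-Lipschitz for the metric $g_\star$ of Lemma \ref{lem:nocollapse}. The pseudo-Anosov map $A_l$ is a homeomorphism of the compact surface $l$, so it is uniformly continuous; if we fix a bi-Lipschitz model of it, it is Lipschitz with some constant $\lambda$. Hence $f|_{l_n}$ is $L^2\lambda$-Lipschitz, with the bound independent of $n$. The same argument applied to $f^{-1}|_{l_{n+1}}=h_n^{-1}\circ A_l^{-1}\circ h_{n+1}$ gives a uniform Lipschitz constant for the inverse.

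\textbf{Step 2: a transverse Lipschitz bound.} I would upgrade this to a global modulus of continuity on $\Gamma_l$, comparing points on different leaves using the adapted property. Take $x\in l_n$ and $y\in l_m$ with $d(x,y)$ small. Then the leaves are Hausdorff-close, and in a foliated chart $l_m$ is a graph over $l_n$ via a map $\psi$ close to the identity. With the adapted choice from Lemma \ref{finerchoice}, $h_m=h_n\circ\psi^{-1}$. This gives
\[
d(f(x),f(y))\le d\bigl(f(x),f(\psi^{-1}(y))\bigr)+d\bigl(f(\psi^{-1}(y)),f(y)\bigr).
\]
The first term is controlled by Step 1. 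The second term is controlled by $d_{0,M}(h_{m+1},h_{n+1})$, which tends to $0$ as $\mathcal{D}(l_m,l_n)\to0$. Together these give a modulus of continuity $\omega$ for $f$ on $\Gamma_l$ with $\omega(t)\to0$ as $t\to0$, and likewise for $f^{-1}$.

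\textbf{Step 3: extension.} A uniformly continuous map extends uniquely to the closure, so we obtain continuous maps $\tilde f,\tilde g:\overline{\Gamma_l}\to\overline{\Gamma_l}$. The image stays inside $\overline{\Gamma_l}$ because $f(\Gamma_l)\subset\Gamma_l$. The identities $\tilde f\circ\tilde g=\mathrm{id}$ and $\tilde g\circ\tilde f=\mathrm{id}$ hold on the dense set $\Gamma_l$, hence on all of $\overline{\Gamma_l}$ by continuity, so $\tilde f$ is a homeomorphism extending $f$. Near $K$ I would work in the complete metric $g_\star$ of Lemma \ref{lem:nocollapse}; alternatively, limits collapsing onto $K$ can be sent to $K$, using that $F$ fixes $p$.

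\textbf{Main obstacle.} The hard part is Step 2. The adapted property is only stated along subsequences converging to a \emph{fixed} leaf $l_s$ of the orbit. Points of $\overline{\Gamma_l}\setminus\Gamma_l$ lie on limit leaves that are not of the form $l_s$, and pairs $l_n,l_m$ that accumulate on each other with both indices tending to infinity are not directly covered. My first attempt would be to strengthen the adapted choice to a uniform statement of the form
\[
d_{0,M}(h_n,h_m)\le \omega\bigl(\mathcal{D}(l_n,l_m)\bigr)\quad\text{for all } n,m.
\]
I would build this by choosing the $h_n$ through a fixed finite atlas of foliated charts and a fixed reference leaf in each chart, so that the graph maps $\psi$ depend continuously on the transverse coordinate. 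The cost is an additional check that this choice can still be made volume preserving, for instance via a Moser-type correction that depends continuously on parameters.
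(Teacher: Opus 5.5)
You locate the difficulty correctly, but Step 2 is a genuine gap, and the repair you sketch cannot close it.

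Lemma \ref{finerchoice} only redefines $h_{n_k}:=h_s\circ\psi_k^{-1}$ along one subsequence converging to one fixed orbit leaf $l_s$. It does not give the identity $h_m=h_n\circ\psi^{-1}$ for arbitrary close pairs $(n,m)$. Imposing it for all such pairs at once is precisely the uniform statement you then try to build.

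Your proposed bound $d_{0,M}(h_n,h_m)\le\omega(\mathcal D(l_n,l_m))$ is not attainable either. At $m=n$ it says $\sup_{x,y\in l_n}|d(h_n(x),h_n(y))-d(x,y)|=0$, i.e.\ every $h_n$ would be an isometry for the restricted ambient distances. The same defect sits in the proof of Lemma \ref{finerchoice}, where the term $(\operatorname{Lip}(h_s)-1)\,d(\psi_k^{-1}(x),y)$ does not tend to $0$.

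What your triangle inequality really uses is $\sup_{w\in l}d\bigl(h_n^{-1}(w),h_m^{-1}(w)\bigr)\le\omega(\mathcal D(l_n,l_m))$ for all $n,m$. Along a dense orbit, a uniformly bi-Lipschitz family with this property extends by continuity to a fibre-preserving homeomorphism $(\mathbb T^{n-2}\setminus\{p\})\times l\to M\setminus K$, i.e.\ a global trivialization of the leaf bundle. So your finite-atlas construction must break down whenever that bundle has nontrivial monodromy: the local product structures of the foliated charts do not glue.

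This obstruction is not an artifact of your method. Since $\overline{\Gamma_l}$ is compact, uniform continuity of $f$ on $\Gamma_l$ is necessary for the conclusion. If the $F$-orbit of $[l]$ is dense (the case used afterwards for $l_\star$), then $\overline{\Gamma_l}=M$ and any extension satisfies $\pi\circ\tilde f=F\circ\pi$. So $\tilde f|_{M\setminus K}$ is a fibre-preserving homeomorphism over $F$. This forces the monodromy $\mu$ of the leaf bundle over $\mathbb T^{n-2}\setminus\{p\}$ to satisfy $\mu\circ F_*=\phi\,\mu\,\phi^{-1}$ for some mapping class $\phi$ of the leaf.

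For the manifolds $M_k$ of Corollary \ref{cor:dehn}, $\mu$ factors through $\mathbb Z^2$ as $(a,b)\mapsto\tau_\alpha^{ka}\tau_\beta^{b}$. Conjugation sends a power of a single Dehn twist to a power of a single Dehn twist. By uniqueness of multitwists, $\tau_\alpha^{p}\tau_\beta^{q}$ with $p,q\neq0$ is not of that form. Hence $F_*$ would have to map $e_1$ and $e_2$ into $\mathbb Ze_1\cup\mathbb Ze_2$, which no hyperbolic matrix does. So no choice of $(h_n)$ makes Step 2 work in this generality; an extra hypothesis (for instance, monodromy compatible with $F_*$) is needed.

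The paper's own proof does not supply the missing ingredient either. It asserts a global bi-Lipschitz bound by interpolating $h_{r(x)}$ and $h_{r(y)}$, with a uniform Lipschitz constant, over a ``foliated collar'' bounded by the two leaves. That presupposes exactly the uniform transverse compatibility that fails here.
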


\begin{proof}
    We first show that $f$ is bi-Lipschitz, which then implies the proposition because $\overline{\Gamma_l}$ is complete.

\begin{lemma}
    $f:\Gamma_l\to\Gamma_l$ is bi-Lipschitz. Moreover, the Lipschitz constant of $f$ is bounded above by $L^2\|A_l\|_{C^1}$.
\end{lemma}

\begin{proof}
    Let $x,y\in\Gamma_l$. We want to show that there exists $C>0$ independent of $x,y$ such that
    \[
    d\bigl(h_{r(x)+1}^{-1}\!\circ A_l\circ h_{r(x)}(x),\; h_{r(y)+1}^{-1}\!\circ A_l\circ h_{r(y)}(y)\bigr)
    \le C\,d(x,y).
    \]
    Let $l_{r(x)}$ and $l_{r(y)}$ be the leaves containing $x$ and $y$ respectively. Consider a compact foliated collar neighbourhood $\mathcal{N}_{x,y}$ of $M$ whose boundary components are exactly $l_{r(x)}\cup l_{r(y)}$. By classical extension arguments, there exists an $L$-Lipschitz homeomorphism $h_{x,y}:\mathcal{N}_{x,y}\to \mathcal{N}_{x,y}$ which coincides with $h_{r(x)}$ and $h_{r(y)}$ on the two boundary leaves. This yields
    \[
    d\bigl(h_{r(x)+1}^{-1}\!\circ A_l\circ h_{r(x)}(x),\; h_{r(y)+1}^{-1}\!\circ A_l\circ h_{r(y)}(y)\bigr)
    \le L^2\|A_l\|_{C^1}\,d(x,y)
    \]
    (recall that the family $(h_n)$ is $L$‑bi‑Lipschitz, so we can apply the same argument to $(h_n^{-1})$, and $\|A_l\|_{C^1}$ denotes the $C^1$ norm of the pseudo‑Anosov homeomorphism outside its finite set of differentiable singularities; the singularities of a pseudo‑Anosov map are not blow‑up singularities). This finishes the proof.
\end{proof}

The bi‑Lipschitz property implies that $f$ extends uniformly to the completion, giving a homeomorphism $\tilde f$ of $\overline{\Gamma_l}$.
\end{proof}

\subsection{Extending $f$ to a relatively Anosov homeomorphism}

Let $l_{\star}\in \mathcal{F}$ be such that the orbit of $[l_\star]$ under $F$ is dense in $(M\setminus K)/\mathcal{F}$. By Proposition \ref{extendingfromorbit}, since $\Gamma_{l_{\star}}\subset M$ is dense, $f: \Gamma_{l_{\star}} \to \Gamma_{l_{\star}}$ extends to a homeomorphism $f_{\star}:M\to M$. We now prove that this extension is relatively Anosov.

\begin{proposition}[Proposition \ref{prop:relanosov}]\label{proofrelanosov}
    $f_{\star}:M\to M$ is relatively Anosov.
\end{proposition}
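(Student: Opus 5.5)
We verify the three conditions in the definition of a relatively Anosov homeomorphism for $f_\star$, in order: leaf preservation, the quotient dynamics, and the uniform leafwise conjugacy to $A$. Throughout we fix the reference leaf $l=l_\star$, identify it with $\Sigma$ by a fixed volume-preserving diffeomorphism, and let $A=A_{l_\star}$ under this identification. We use the $L$-bi-Lipschitz adapted family $(h_n)$ from Proposition \ref{prop:adaptedchoicelip} and the extension $f_\star$ from Proposition \ref{extendingfromorbit}.

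\emph{Step 1: $f_\star$ preserves $\mathcal F$ and covers $F$.} On $\Gamma_{l_\star}$ we have $f_\star(l_n)=l_{n+1}$ by construction, so $\pi\circ f_\star=F\circ\pi$ on the dense set $\Gamma_{l_\star}$. Both sides are continuous on $M\setminus K$, so the identity extends to all of $M\setminus K$. Hence $f_\star$ maps the fibre $\pi^{-1}(q)$ onto $\pi^{-1}(F(q))$, which gives condition 1. Since $F(p)=p$, and the complement $M\setminus K$ is mapped to itself, the homeomorphism $f_\star$ also satisfies $f_\star(K)=K$. This shows that the induced map extends continuously to $p$ as a fixed point, which is condition 2.

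\emph{Step 2: leafwise conjugacies on every leaf.} For leaves in the orbit we simply take $h_{n,l_m}:=h_{n+m}$. Then the square in condition 3 commutes by the definition $f|_{l_n}=h_{n+1}^{-1}\circ A\circ h_n$, and the bounds on $h_{n,l_m}$ and its inverse are the uniform bi-Lipschitz bound $L$. For an arbitrary leaf $l'$, write it as a Hausdorff limit $l'=\lim_k l_{n_k}$, which is possible by density of the orbit of $[l_\star]$ in the quotient.
\begin{itemize}
\item The maps $h_{n_k}\circ\psi_k$, with $\psi_k:l'\to l_{n_k}$ the graph maps from the proof of Lemma \ref{finerchoice}, are uniformly $L'$-bi-Lipschitz and equicontinuous.
\item By Arzel\`a--Ascoli we extract a limit $h_{0,l'}:l'\to\Sigma$. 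It is $L'$-bi-Lipschitz and volume preserving, because volume preservation passes to uniform limits with the conditional volumes converging.
\item Doing the same along $l_{n_k+j}\to f_\star^j(l')$ with a diagonal subsequence gives maps $h_{j,l'}$. Passing to the limit in $h_{n_k+j+1}\circ f=A\circ h_{n_k+j}$ yields $h_{j+1,l'}\circ f_\star=A\circ h_{j,l'}$.
\end{itemize}
Adaptedness (the vanishing of $d_{0,M}$) guarantees that the limits are compatible with the graph identifications and do not depend on the tearing.

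\emph{Main obstacle.} The hard part is the regularity demanded in condition 3: $C^1$ bounds rather than Lipschitz ones, and the fact that $A$ is only a pseudo-Anosov homeomorphism. Arzel\`a--Ascoli yields only bi-Lipschitz limits. My first attempt would be to choose the $h_n$ at the outset to be $C^1$ volume-preserving diffeomorphisms with uniform $C^{1,\alpha}$ bounds, obtained via Moser's trick on the uniformly non-collapsed leaves of Lemma \ref{lem:nocollapse}. Equicontinuity of the derivatives then gives $C^1$ limits with $\|h_{n,l}\|_{C^1},\|h_{n,l}^{-1}\|_{C^1}\le L$. The norm $\|A\|_{C^1}$ would be read, as in the lemma preceding Proposition \ref{extendingfromorbit}, away from the finitely many singular points, and those points are handled by continuity. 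Finally, I would check that the resulting $f_\star$ is $C^1$ on $M\setminus K$ as required later by Theorem \ref{thm:main}. That holds leafwise; transversally it follows from $f_\star$ covering the smooth map $F$ together with the smooth dependence of the graph maps $\psi_k$ in foliated charts.
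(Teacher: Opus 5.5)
Your Steps 1 and 2 follow the paper's route, and your version is cleaner. The route is leaf invariance from density of $\Gamma_{l_\star}$ plus continuity; conjugacies on a leaf $\tilde l$ off the orbit are obtained as limits of the $h_{n_k}$ along $l_{n_k}\to\tilde l$; then one passes to the limit in the conjugacy relation. In the paper, $\phi_{\tilde l}$ and the section $\sigma$ cancel out of $\tilde h_{\tilde l,n+1}^{-1}\circ A_{\tilde l}\circ\tilde h_{\tilde l,n}$. Your diagonal Arzel\`a--Ascoli argument needs only the uniform bi-Lipschitz bound, local triviality near $\tilde l$, and continuity of $f_\star$, so you can drop the appeal to adaptedness. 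One repair in Step 1: you assume $f_\star(M\setminus K)\subset M\setminus K$ in order to compose with $\pi$. Instead, first prove $\bar\pi\circ f_\star=F\circ\bar\pi$ on all of $M$ for the continuous extension $\bar\pi\colon M\to\mathbb{T}^{n-2}$, and deduce $f_\star(K)=K$ from that.

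The genuine gap is the $C^1$ requirement in condition 3, and your proposed fix fails as stated.
\begin{itemize}
\item Getting $C^1$ limits with $\|h_{n,l}^{\pm1}\|_{C^1}\le L$ needs uniform $C^{1,\alpha}$ bounds on the $h_n$, i.e.\ uniformly bounded geometry of the leaves $l_n$. These leaves accumulate on $K$, because the orbit of $[l_\star]$ is dense in $\mathbb{T}^{n-2}\setminus\{p\}$.
\item Lemma \ref{lem:nocollapse} gives only a lower bound on diameters. For its metric $g=r^{-2}g_0$, its own estimate $\operatorname{diam}_g(L)\ge\log(1+c/\min_L r)$ shows that leaves approaching $K$ with $g_0$-diameter bounded below have $g$-diameter tending to infinity. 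This is exactly the situation in Theorem \ref{thm:main2}, where $K\cong F$ is a smooth limit of leaves.
\item Since $h_n^{-1}$ maps $\Sigma$ onto $l_n$, you get $\operatorname{Lip}(h_n^{-1})\ge\operatorname{diam}(l_n)/\operatorname{diam}(\Sigma)\to\infty$. So Moser's trick cannot yield a uniform $L$ in that metric.
\item With $g_0$ instead, nothing in the definition of a singular toral manifold prevents the leaves from degenerating near $K$.
\end{itemize}
What is missing is a metric on $M\setminus K$ in which the leaves, with the normalized conditional measures of $\mu$, have uniformly bounded $C^{1,\alpha}$ geometry up to $K$. You would also need to check that the re-chosen family is still adapted and cross-leaf bi-Lipschitz, since the existence of $f_\star$ was derived from that family. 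The paper's proof is silent on this too: it only produces volume-preserving homeomorphisms $h_{\tilde l,p}$. So this is open work, not something you can cite.

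Separately, your closing claim that $f_\star$ is $C^1$ leafwise is false. The map $f_\star|_{f^j(l)}=h_{j+1,l}^{-1}\circ A\circ h_{j,l}$ inherits the non-differentiability of $A$ at its prong singularities. Covering the smooth map $F$ gives no transverse regularity either, because the dependence of $h_{j,l}$ on $l$ is controlled only in $C^0$. This claim is not needed for the present statement, but you should not assert it.
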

\begin{proof}
    First, by continuity of $f_\star$, since $f$ preserves a dense subset of leaves of $\mathcal{F}$, $f_\star$ preserves the foliation $\mathcal{F}$. Let $\tilde{l}\in\mathcal{F}$ be a leaf that does not lie in the orbit of $l_\star$ and, as before, write $\tilde{l}_n=\pi^{-1}(F^n([\tilde{l}]))$. Let $(l_{n_k})$ be a sequence of leaves in the orbit of $l_\star$ such that $l_{n_k}\to \tilde{l}_p$ in the Hausdorff distance $\mathcal{D}$. By Lemma \ref{finerchoice}, there exists a volume preserving homeomorphism $h_{\tilde{l},p}:\tilde{l}_p\to l_\star$ such that
    \[
    h_{l_\star,n_k}\to h_{\tilde{l},p}.
    \]

    \begin{lemma}
        Let $R>0$. If $\mathcal{D}(l_\star,\tilde{l})= R$, then there exists an $L$‑bi‑Lipschitz volume preserving diffeomorphism $\phi_{\tilde{l}}:l_\star\to \tilde{l}$ such that
        \[
        \limsup_{k\to\infty}\ d_{0,M}(h_{l_\star,n_k},\phi_{\tilde{l}}\circ h_{\tilde{l},p})\le R .
        \]
    \end{lemma}

   Now define $(\tilde{h}_{\tilde{l},p})_p =(\phi_{\tilde{l}}\circ h_{\tilde{l},p})_p$, which gives maps $\tilde{h}_{\tilde{l},p}:\tilde{l}_p\to \tilde{l}$.
   Let $PA^2(l)$ be the space of pseudo‑Anosov homeomorphisms that are $C^2$ outside their finite set of singularities, and set
    \[
    PA^2(\mathcal{F})=\bigcup\limits_{[l]\in M\setminus K/\mathcal{F}}PA^{2}(l).
    \]
    Choose a continuous section
    \[
    \sigma: M\setminus K/\mathcal{F}\to PA^{2}(\mathcal{F}),
    \]
    i.e., $\sigma([l])\in PA^2(l)$. We assume that $\sigma([l_\star])=A_{l_{\star}}$ and that for every $\tilde{l}$,
    \[
    \phi_{\tilde{l}}^{-1}\circ \sigma([\tilde{l}])\circ\phi_{\tilde{l}}=A_{l_\star}.
    \]
    For a leaf $\tilde{l}$ define $A_{\tilde{l}}=\sigma([\tilde{l}])$ and define $g|_{\tilde{l}_n}:\tilde{l}_n\to\tilde{l}_{n+1}$ by
    \[
    g|_{\tilde{l}_n}=\tilde{h}_{\tilde{l},n+1}^{-1}\circ A_{\tilde{l}}\circ \tilde{h}_{\tilde{l},n}.
    \]
    This yields a homeomorphism
    \[
    g:\Gamma_{\tilde{l}}\to \Gamma_{\tilde{l}}.
    \]

    \begin{lemma}
        $f_{\star}|_{\Gamma_{\tilde{l}}}=g$.
    \end{lemma}

    \begin{proof}
        Let $x\in \Gamma_{l_{\star}}$ and $(x_m)_m$ a sequence of points of $\Gamma_{\tilde{l}}$ converging to $x$. We have
        \[
        d\bigl(f_{\star}(x),g(x_m)\bigr)=
        d\bigl(h_{l_{\star},r(x)+1}^{-1}\!\circ A_{l_\star}\!\circ h_{l_{\star},r(x)}(x),\;
        \tilde{h}_{\tilde{l},r(x_m)+1}^{-1}\!\circ A_{\tilde{l}}\!\circ \tilde{h}_{\tilde{l},r(x_m)}(x_m)\bigr).
        \]
        By construction,
        \[
        A_{\tilde{l}}\circ\tilde{h}_{\tilde{l},r(x_m)}(x_m)
        = A_{\tilde{l}}\circ\phi_{\tilde{l}}\circ h_{\tilde{l},r(x_m)}(x_m)
        \to A_{\tilde{l}}\circ\phi_{\tilde{l}}\circ h_{l_{\star},r(x)}(x),
        \]
        and therefore
        \[
        \tilde{h}_{\tilde{l},r(x_m)+1}^{-1}\!\circ A_{\tilde{l}}\circ \tilde{h}_{\tilde{l},r(x_m)}(x_m)
        \to h_{l_{\star},r(x)+1}^{-1}\!\circ\phi_{\tilde{l}}^{-1}\!\circ A_{\tilde{l}}\circ\phi_{\tilde{l}}\circ h_{l_{\star},r(x)}(x).
        \]
        Since $\phi_{\tilde{l}}^{-1}\!\circ A_{\tilde{l}}\circ\phi_{\tilde{l}}=A_{l_\star}$, we obtain
        \[
        d\bigl(f_{\star}(x),g(x_m)\bigr)\to0,
        \]
        proving the lemma.
    \end{proof}

    The previous lemma shows that $f_{\star}$ is relatively Anosov with respect to every leaf, which completes the proof of Proposition \ref{proofrelanosov} and hence Theorem \ref{prop:relanosov}.
\end{proof}

\section{Proof of Proposition \ref{prop:dichotomy} and Corollaries}
\begin{proof}
For $\psi,\varphi\in\mathcal{H}$ set $\tilde\psi(l)=\int_l\psi|_l\,d\mu_l$, $\tilde\varphi(l)=\int_l\varphi|_l\,d\mu_l$. By Rokhlin disintegration,
\[
\int_M \psi\cdot(\varphi\circ f^n)\,d\mu = \int_{M/\F}\int_l \psi|_l\cdot(\varphi|_{f^n(l)}\circ f_{n,l})\,d\mu_l\,d\hat\mu(l),
\]
\[
\int_M\psi\,d\mu = \int_{M/\F}\tilde\psi\,d\hat\mu,\qquad
\int_M\varphi\,d\mu = \int_{M/\F}\tilde\varphi\,d\hat\mu.
\]
Add and subtract $\int_{M/\F}\tilde\psi(l)\,\tilde\varphi(F^n(l))\,d\hat\mu(l)$ to obtain
\[
\Cor_\mu(\psi,\varphi,f^n) = \int_{M/\F} \Delta(l)\,d\hat\mu(l) + \Delta_{\text{vert}},
\]
where
\[
\Delta(l)=\int_l \psi|_l\cdot(\varphi|_{f^n(l)}\circ f_{n,l})\,d\mu_l - \tilde\psi(l)\,\tilde\varphi(F^n(l)),
\]
\[
\Delta_{\text{vert}}=\int_{M/\F}\tilde\psi(l)\,\tilde\varphi(F^n(l))\,d\hat\mu(l) - \Bigl(\int_{M/\F}\tilde\psi\,d\hat\mu\Bigr)\Bigl(\int_{M/\F}\tilde\varphi\,d\hat\mu\Bigr).
\]
By the definition of horizontal decay, $|\Delta(l)|\le C_{\psi,\varphi}\delta_n$ for $\hat\mu$-almost every $l$, so $|\int \Delta(l)\,d\hat\mu(l)|\le C_{\psi,\varphi}\delta_n$. By the definition of vertical decay, $|\Delta_{\text{vert}}|\le C_{\psi,\varphi}\gamma_n$. Hence
\[
|\Cor_\mu(\psi,\varphi,f^n)|\le C_{\psi,\varphi}(\delta_n+\gamma_n)\le 2C_{\psi,\varphi}\max\{\delta_n,\gamma_n\}.
\]
Absorbing the factor $2$ into the constant (renaming $C_{\psi,\varphi}$) completes the proof. \end{proof}

The dichotomy has several immediate consequences, which we state as corollaries.

\begin{corollary}\label{cor:product}
Let $f_1:X\to X$ and $f_2:Y\to Y$ be homeomorphisms preserving probability measures $\mu_1,\mu_2$ and having decay rates $\gamma_n,\delta_n$ respectively. Then the product map $f_1\times f_2$ on $X\times Y$ with product measure $\mu_1\otimes\mu_2$ has decay rate $\max\{\gamma_n,\delta_n\}$.
\end{corollary}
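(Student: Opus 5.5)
The plan is to realize the product as a system with an invariant foliation by compact leaves and then apply Proposition \ref{prop:dichotomy}. On $M=X\times Y$ take the foliation $\F=\{\{x\}\times Y : x\in X\}$. It is invariant under $f=f_1\times f_2$ because $f(\{x\}\times Y)=\{f_1(x)\}\times Y$. The quotient $M/\F$ is identified with $X$, and the induced map is $F=f_1$. The leaves are copies of $Y$; I will assume, as the statement implicitly does, that $Y$ is compact. The singular set is empty.

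For the product measure $\mu_1\otimes\mu_2$ the Rokhlin disintegration is explicit. The quotient measure is $\hat\mu=\mu_1$, and the conditional measure on the leaf $l_x=\{x\}\times Y$ is $\mu_{l_x}=\delta_x\otimes\mu_2$. The proof then has two checks.

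\emph{Vertical decay.} For $\psi$ on $X\times Y$ we get $\tilde\psi(x)=\int_Y\psi(x,y)\,d\mu_2(y)$. We need $\tilde\psi$ and $\tilde\varphi$ to lie in the class $\mathcal H$ for $(X,f_1)$. If $\mathcal H$ is the H\"older class with exponent $\alpha$ for the product metric, then $|\tilde\psi(x)-\tilde\psi(x')|\le \|\psi\|_\alpha d(x,x')^\alpha$. Vertical decay with rate $\gamma_n$ therefore follows directly from the hypothesis on $f_1$, with constant $C_{\tilde\psi,\tilde\varphi}$.

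\emph{Horizontal decay.} On the leaf $l_x$ we have $f_{n,l_x}(x,y)=(f_1^n x, f_2^n y)$. The leafwise correlation is then
\[
\int_Y\psi(x,y)\,\varphi(f_1^nx,f_2^ny)\,d\mu_2(y)-\int_Y\psi(x,\cdot)\,d\mu_2\int_Y\varphi(f_1^nx,\cdot)\,d\mu_2 ,
\]
which is $\Cor_{\mu_2}(\psi_x,\varphi_{f_1^nx},f_2^n)$, where $\psi_x=\psi(x,\cdot)$. Each $\psi_x$ is H\"older on $Y$ with norm at most $\|\psi\|_{\alpha}$, so the hypothesis on $f_2$ bounds this by $C_{\psi_x,\varphi_{f_1^nx}}\delta_n$.

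The main obstacle is uniformity. The definition of horizontal decay requires a single constant $C_{\psi,\varphi}$ for almost every leaf, while the constant coming from $f_2$ depends on the pair of observables, and here the second observable $\varphi_{f_1^nx}$ varies with $n$ and $x$. I would resolve this by making explicit the standard fact that the constants in the decay hypothesis can be taken of the form $C\|\psi\|_\alpha\|\varphi\|_\alpha$. For a fixed $n$, correlation is a bilinear form, so this follows by the uniform boundedness principle applied to the family $\Cor(\cdot,\cdot,f_2^n)/\delta_n$ on the Banach space $C^\alpha$. Since $\|\psi_x\|_\alpha\le\|\psi\|_\alpha$ and $\|\varphi_{f_1^nx}\|_\alpha\le\|\varphi\|_\alpha$, this gives the uniform bound $C\|\psi\|_\alpha\|\varphi\|_\alpha\delta_n$.

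Proposition \ref{prop:dichotomy} then yields $|\Cor(\psi,\varphi,(f_1\times f_2)^n)|\le C_{\psi,\varphi}\max\{\gamma_n,\delta_n\}$.
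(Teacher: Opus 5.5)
Your proposal is correct and follows the same route as the paper: foliate $X\times Y$ by the fibres $\{x\}\times Y$, identify the quotient dynamics with $f_1$ and the leafwise cycles with $f_2$, and apply Proposition~\ref{prop:dichotomy}. Your extra checks make rigorous the uniformity that the paper's one-line proof only asserts: you verify that $\tilde\psi$ is H\"older on $X$, and your Banach--Steinhaus argument gives constants of the form $C\|\psi\|_\alpha\|\varphi\|_\alpha$, so the bound survives the dependence of $\varphi(f_1^n x,\cdot)$ on $n$ and $x$.
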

\begin{proof}
Take the foliation by vertical fibres $\{x\}\times Y$. The quotient map is $f_1$ (vertical decay $\gamma_n$) and the horizontal decay on each fibre is exactly that of $f_2$, uniformly in $x$ because the fibre measures are $\mu_2$. Apply Proposition \ref{prop:dichotomy}. \end{proof}

\begin{corollary}\label{cor:skew}
Let $F:X\to X$ preserve $\hat\mu$ and let $G_x:Y\to Y$ be a measurable family such that $f(x,y)=(F(x),G_x(y))$ preserves $\mu=\int \mu_x\,d\hat\mu(x)$ with each $\mu_x$ invariant under $G_x$. If every $G_x$ has decay rate $\delta_n$ uniformly in $x$, and $F$ has decay rate $\gamma_n$, then $f$ has decay rate $\max\{\gamma_n,\delta_n\}$.
\end{corollary}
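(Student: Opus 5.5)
The plan is to apply Proposition \ref{prop:dichotomy} to the foliation of $X\times Y$ by the fibres $\{x\}\times Y$, exactly as in the proof of Corollary \ref{cor:product}. The only new feature is that the fibre dynamics $G_x$ now varies with the base point.

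First, set up the structure needed by Proposition \ref{prop:dichotomy}. The leaves are $l_x=\{x\}\times Y$. Since $f(x,y)=(F(x),G_x(y))$, we have $f(l_x)=l_{F(x)}$, so the foliation is invariant and the induced quotient map is $F$. The measure $\mu=\int\mu_x\,d\hat\mu(x)$ is already written as a disintegration along this partition, so by uniqueness of the Rokhlin disintegration the conditional measures are the $\mu_x$ and the quotient measure is $\hat\mu$. There is no singular set, so the measure-zero hypothesis holds trivially.

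Second, verify the two rates.

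\emph{Vertical decay.} For $\psi\in\mathcal H$, the fibre average $\tilde\psi(x)=\int_Y\psi(x,y)\,d\mu_x(y)$ is a function on $X$. The decay hypothesis on $F$ then gives the $\gamma_n$ bound, provided $\tilde\psi$ and $\tilde\varphi$ lie in the observable class for $F$ with constants controlled by $\psi,\varphi$. I expect this to be the main obstacle. If $x\mapsto\mu_x$ is only measurable, $\tilde\psi$ need not be Hölder. I would handle this by interpreting the hypothesis on $F$ as applying to the fibre averages of observables in $\mathcal H$, which is the formulation in the vertical decay definition. Alternatively, one can assume $x\mapsto\mu_x$ is Hölder in a weak sense, which holds when $\mu_x\equiv\nu$ is a fixed measure; in that case $\tilde\psi$ inherits the Hölder modulus of $\psi$ in the $x$ variable.

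\emph{Horizontal decay.} On the leaf $l_x$ we have $f_{n,l_x}(y)=G^{(n)}_x(y):=G_{F^{n-1}x}\circ\cdots\circ G_x(y)$, and the horizontal correlation is
\[
\int_Y\psi(x,y)\,\varphi\bigl(F^nx,G^{(n)}_x y\bigr)\,d\mu_x-\int_Y\psi(x,\cdot)\,d\mu_x\int_Y\varphi(F^nx,\cdot)\,d\mu_{F^nx}.
\]
The phrase ``every $G_x$ has decay rate $\delta_n$ uniformly in $x$'' has to be read as a bound on these non-autonomous fibre compositions, from $(Y,\mu_x)$ to $(Y,\mu_{F^nx})$. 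The observables are $\psi(x,\cdot)$ and $\varphi(F^nx,\cdot)$, whose Hölder norms on $Y$ are bounded by those of $\psi,\varphi$ uniformly in $x$, so the constant $C_{\psi,\varphi}$ does not depend on $x$. I would state this interpretation explicitly as the meaning of the uniformity hypothesis. In the constant case $G_x\equiv G$ and $\mu_x\equiv\nu$, it reduces to the decay of $G$, recovering Corollary \ref{cor:product}.

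With both bounds holding for $\hat\mu$-a.e.\ $x$, Proposition \ref{prop:dichotomy} gives $|\Cor_\mu(\psi,\varphi,f^n)|\le C_{\psi,\varphi}\max\{\gamma_n,\delta_n\}$, which is the claim.
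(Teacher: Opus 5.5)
Your route is the paper's route. The paper's proof is the single sentence ``the fibres $\{x\}\times Y$ give horizontal decay $\delta_n$ (uniform) and vertical decay $\gamma_n$; apply Proposition~\ref{prop:dichotomy}.'' What you add is the observation that the leafwise map is the cocycle $G^{(n)}_x=G_{F^{n-1}x}\circ\cdots\circ G_x$, not $G_x^n$, so the uniformity hypothesis must be read as a bound on these compositions.

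\emph{This reading is forced, not merely convenient.} Under the literal hypothesis (each $G_x$, iterated on its own, mixes at rate $\delta_n$) the corollary is false. Here is a counterexample.
\begin{itemize}
\item Let $F$ be the full shift on $\{0,1\}^{\mathbb Z}$ with the Bernoulli$(\frac12,\frac12)$ measure.
\item Let $Y=\mathbb T^2$ with Lebesgue measure $\nu$, and let $G$ be a hyperbolic toral automorphism.
\item Set $G_x=G$ if $x_0=1$ and $G_x=G^{-1}$ if $x_0=0$.
\end{itemize}
All hypotheses hold with exponential $\gamma_n,\delta_n$. However, $f^n$ acts on the fibre over $x$ as $G^{S_n(x)}$, where $S_n(x)=\sum_{j<n}(2x_j-1)$ is a simple random walk. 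Take $\psi=\varphi=u(y)=\cos(2\pi\langle m,y\rangle)$ with $m\in\mathbb Z^2\setminus\{0\}$. Then $\int_Y u\cdot(u\circ G^k)\,d\nu$ equals $\frac12$ if $k=0$ and $0$ otherwise, since $(G^k)^{T}m\neq\pm m$ for $k\neq0$. Hence
\[
\Cor_\mu(\psi,\varphi,f^n)=\tfrac12\,\hat\mu\{S_n=0\}\asymp n^{-1/2}\qquad(n\ \text{even}),
\]
which is not exponential. On the leaves where $S_n(x)=0$ the horizontal correlation equals $\frac12$, so horizontal decay fails outright. What you prove, and what the paper's one line tacitly assumes, is therefore the corollary with the hypothesis replaced by your uniform cocycle bound. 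With that replacement your argument is complete, and stating the replacement explicitly is the right call.

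\emph{Your vertical-decay worry, by contrast, is not a real obstacle.} Suppose $F$ is invertible, as in the setting of Proposition~\ref{prop:dichotomy}, where $f$ permutes leaves.
\begin{itemize}
\item $f_*\mu=\mu$ and uniqueness of the disintegration give $(G_x)_*\mu_x=\mu_{F(x)}$ for $\hat\mu$-a.e.\ $x$.
\item Combined with $(G_x)_*\mu_x=\mu_x$, this makes $x\mapsto\mu_x$ an $F$-invariant measurable map.
\item $F$ is mixing, hence ergodic, so this map is a.e.\ constant: $\mu_x=\nu$.
\end{itemize}
Thus $\mu=\hat\mu\otimes\nu$, and $\tilde\psi(x)=\int_Y\psi(x,y)\,d\nu(y)$ is H\"older with constant at most that of $\psi$. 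The vertical bound then follows directly from the decay of $F$. The special case you offered as an alternative is in fact the only case.
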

\begin{proof}
The foliation by fibres $\{x\}\times Y$ gives horizontal decay $\delta_n$ (uniform) and vertical decay $\gamma_n$; apply Proposition \ref{prop:dichotomy}. \end{proof}

\begin{corollary}\label{cor:ph}
Let $f:M\to M$ be a $C^{1+\alpha}$ partially hyperbolic diffeomorphism with an invariant center foliation $\F^c$ whose leaves are compact. Assume the quotient map on $M/\F^c$ is Anosov (hence has exponential decay of correlations) and the restriction to each center leaf is an expanding map with uniform exponential decay. Then $f$ has exponential decay of correlations.
\end{corollary}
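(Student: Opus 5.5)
The plan is to obtain Corollary~\ref{cor:ph} from Proposition~\ref{prop:dichotomy}, in the same way as Corollaries~\ref{cor:product} and~\ref{cor:skew}. We take $\F=\F^c$ as the invariant foliation. Its leaves are compact and there is no singular set, so the hypothesis on the singular set holds trivially. For $\mu$ we take the smooth invariant volume (more generally, the invariant measure under consideration). We then disintegrate $\mu=\int\mu_l\,d\hat\mu(l)$ along the center leaves via Rokhlin. The quotient map $F$ on $M/\F^c$ is Anosov by hypothesis, so it has exponential decay of correlations for H\"older observables on $M/\F^c$ with respect to $\hat\mu$, at some rate $\gamma_n=e^{-a n}$.

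\textbf{Step 1 (vertical decay).} To apply this decay to $\tilde\psi(l)=\int_l\psi\,d\mu_l$, we need $\tilde\psi$ to be H\"older on $M/\F^c$ whenever $\psi$ is H\"older on $M$. I would get this from the structure of compact center foliations of partially hyperbolic diffeomorphisms: the leaves vary continuously, and the holonomies between nearby leaves are H\"older. I would also need the conditional densities of $\mu_l$ to vary H\"older-continuously with $l$. Given both, $\tilde\psi$ is H\"older, and vertical decay follows with $C_{\psi,\varphi}$ controlled by the H\"older norms.

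\textbf{Step 2 (horizontal decay).} For each leaf $l$ we look at the cycle $f_{n,l}:l\to f^n(l)$. The hypothesis that the restriction to each center leaf is an expanding map with uniform exponential decay is read exactly as the horizontal decay estimate with $\delta_n=e^{-bn}$. The constant must be uniform in $l$, depending only on the H\"older norms of $\psi|_l$ and $\varphi|_{f^n(l)}$, and these are bounded by those of $\psi$ and $\varphi$. Uniformity in $l$ is exactly what the hypothesis supplies, as in Corollary~\ref{cor:skew}. Proposition~\ref{prop:dichotomy} then gives
\[
|\Cor_\mu(\psi,\varphi,f^n)|\le C_{\psi,\varphi}\max\{e^{-an},e^{-bn}\}=C_{\psi,\varphi}e^{-\min\{a,b\}n},
\]
which is exponential decay.

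\textbf{Main obstacle.} The hard part is Step 1, the regularity of $\tilde\psi$. This requires H\"older dependence of the center leaves and of the conditional measures on the base point. My first attempt would be to work locally in a foliated chart, identifying nearby leaves by a H\"older holonomy, and to use absolute continuity to write $\mu_l$ with a density that varies H\"older-continuously. If that regularity is not available, I would fall back on taking "uniform exponential decay" on the quotient for the class $\{\tilde\psi:\psi\in\mathcal H\}$ as part of the hypothesis that the quotient is Anosov.
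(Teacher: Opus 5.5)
Your route is the paper's. Its proof of Corollary~\ref{cor:ph} is the single sentence ``vertical decay is exponential (Anosov), horizontal decay is exponential (expanding maps), apply Proposition~\ref{prop:dichotomy}'' with $\F=\F^c$, and your Step~2 reads the leafwise hypothesis as the horizontal decay estimate exactly as the paper does. The paper never addresses the point you isolate in Step~1: Anosov decay on $M/\F^c$ applies to $\tilde\psi(l)=\int_l\psi\,d\mu_l$ only if $\tilde\psi$ lies in the class for which the quotient mixes exponentially. Your fallback of building this into the hypothesis is precisely what the paper tacitly does.

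The one ingredient you add beyond the paper would fail. Writing $\mu_l$ as a H\"older-varying density against leaf volume presupposes that $\F^c$ is absolutely continuous. In the regime this corollary has in mind (expanding center, i.e.\ positive center exponents for the invariant volume $\mu$), it is not. Suppose $\mu_l\ll\operatorname{vol}_l$ and the center Jacobian of $f^n$ is at least $e^{cn}$ for all large $n$ on a set $E\subset l$ of positive $\mu_l$-measure. Then $\operatorname{vol}(f^n E)\ge e^{cn}\operatorname{vol}_l(E)\to\infty$. But by Poincar\'e recurrence for $F$, the leaf $f^n(l)$ returns infinitely often to a positive-measure set of leaves of volume at most some $V$, which is a contradiction. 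So the conditionals $\mu_l$ are singular with respect to leaf volume. This is the Shub--Wilkinson/Ruelle--Wilkinson ``Fubini's nightmare'' phenomenon, with atomic conditionals for circle leaves. No H\"older-holonomy argument can therefore give regularity of $\tilde\psi$, and your fallback is forced.

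For the same reason, the leafwise decay in Step~2 must be understood with respect to these singular $\mu_l$, not leaf volume. In fact the hypothesis cannot be taken literally. The Anosov quotient has periodic points, so there is a periodic center leaf $l$. There $f^p|_l$ is a diffeomorphism of a compact manifold, hence preserves total leaf volume and cannot be uniformly expanding. With both estimates taken as hypotheses, the corollary is a direct restatement of Proposition~\ref{prop:dichotomy}, and your argument is then exactly as complete as the paper's.
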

\begin{proof}
The vertical decay is exponential (Anosov), the horizontal decay is exponential (expanding maps). Proposition \ref{prop:dichotomy} gives exponential decay. \end{proof}

\begin{corollary}\label{cor:cover}
Let $\pi:\tilde M\to M$ be a finite covering map and $\tilde f$ a lift of $f$. If $f$ has decay rate $\rho_n$, then $\tilde f$ has the same decay rate.
\end{corollary}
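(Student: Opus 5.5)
We apply Proposition \ref{prop:dichotomy} to the foliation of $\tilde M$ by the fibres of $\pi$. These fibres are finite sets, so they are compact leaves, and they are permuted by $\tilde f$ because $\pi\circ\tilde f=f\circ\pi$. Take $\tilde\mu$ to be the lift of $\mu$, normalised to total mass one. Its Rokhlin disintegration is explicit: $\hat\mu=\mu$ on $M/\F\cong M$, and $\mu_l$ is the uniform counting measure on the fibre $l=\pi^{-1}(x)$. The induced quotient map is exactly $f$, and the singular set is empty. So the statement reduces to verifying vertical decay at rate $\rho_n$ and horizontal decay at some rate $\delta_n\lesssim\rho_n$. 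Then $\max\{\gamma_n,\delta_n\}\lesssim\rho_n$.

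\textbf{Vertical part.} For $\psi\in\mathcal H$ on $\tilde M$ we have $\tilde\psi(x)=\frac{1}{d}\sum_{y\in\pi^{-1}(x)}\psi(y)$, where $d$ is the degree of the cover. Since $\pi$ is a local isometry for the lifted metric and the sheets are uniformly separated, $\tilde\psi$ is H\"older on $M$ with H\"older constant bounded by that of $\psi$. The vertical correlation in the definition is then literally $\Cor_\mu(\tilde\psi,\tilde\varphi,f^n)$. The hypothesis on $f$ gives the bound $C_{\tilde\psi,\tilde\varphi}\rho_n$, so $\gamma_n=\rho_n$.

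\textbf{Horizontal part.} For a fibre $l=\pi^{-1}(x)$, the $l$-cycle $\tilde f^n|_l$ is a bijection $l\to\pi^{-1}(f^nx)$. The horizontal term is
\[
\Delta_n(x)=\frac1d\sum_{y\in l}\psi(y)\,\varphi(\tilde f^n y)-\tilde\psi(x)\,\tilde\varphi(f^n x).
\]
To control it, I decompose $\psi=\tilde\psi\circ\pi+\psi_0$ and $\varphi=\tilde\varphi\circ\pi+\varphi_0$, where $\psi_0$ and $\varphi_0$ have zero fibre average. Every cross term containing a fibre-constant factor cancels exactly against $\tilde\psi\tilde\varphi\circ f^n$, because $\tilde f^n$ is a bijection of fibres. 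So $\Delta_n(x)=\frac1d\sum_{y\in l}\psi_0(y)\varphi_0(\tilde f^ny)$. Since the dichotomy only needs $\int\Delta_n\,d\hat\mu$, what must be bounded is the correlation of the fibre-mean-zero parts, $\int_{\tilde M}\psi_0\cdot\varphi_0\circ\tilde f^n\,d\tilde\mu$.

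\textbf{Main obstacle.} The hard step is to show that this correlation is $O(\rho_n)$ using only the hypothesis on $f$. My first attempt will be to write $\psi_0$ and $\varphi_0$ via the local sections of $\pi$ as vector-valued H\"older observables on $M$. Their correlation then becomes a correlation for $f$ twisted by the finite permutation cocycle of the cover. I would then try to expand this along the finitely many irreducible components of the permutation representation. On each component, I would reduce to a scalar correlation for $f$ by absorbing the bounded cocycle into the observables over a fixed finite partition of $M$ into evenly covered sets. That reduction would yield the bound $C\rho_n$, and Proposition \ref{prop:dichotomy} would then conclude.

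I expect this reduction is exactly where the argument may fail. It needs $\tilde f$ to be mixing on the cover, which is not automatic: for example, on a disconnected trivial cover the mean-zero correlation does not decay at all. So this step is the crux, and I would check it first.
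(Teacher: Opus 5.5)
There is a genuine gap, at exactly the step you flag, and it cannot be closed. Your reduction is sound up to that point. The disintegration is as you describe, $\tilde\psi$ is H\"older, and the vertical term is exactly $\Cor_\mu(\tilde\psi,\tilde\varphi,f^n)$. Since $\tilde f^n$ maps $\pi^{-1}(x)$ bijectively onto $\pi^{-1}(f^nx)$, the horizontal term is exactly $\frac1d\sum_{y\in\pi^{-1}(x)}\psi_0(y)\,\varphi_0(\tilde f^ny)$.

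The crux step fails because the corollary as stated is false. The corollary assumes nothing about connectedness of $\tilde M$ or mixing of $\tilde f$, so your example is a counterexample to the statement, not merely to your method. Take $\tilde M=M\times\{0,1\}$, $\tilde f=f\times\mathrm{id}$, and $\psi=\varphi=\mathbf 1_{M\times\{0\}}-\mathbf 1_{M\times\{1\}}$, which is locally constant and hence H\"older. Then $\Cor_{\tilde\mu}(\psi,\varphi,\tilde f^n)=1$ for every $n$. For the sheet-swapping lift $(x,i)\mapsto(fx,1-i)$ it equals $(-1)^n$.

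Your plan to absorb the cocycle into the observables breaks for a concrete reason. In fixed evenly covered charts, let $\sigma_n(x)$ be the permutation by which $\tilde f^n$ carries the fibre over $x$ to the fibre over $f^nx$. It is a cocycle, $\sigma_n(x)=\sigma_1(f^{n-1}x)\cdots\sigma_1(x)$, and depends on the whole orbit segment. So it cannot be folded into observables chosen independently of $n$. The twisted integrals $\int_M\Psi\cdot\chi(\sigma_n)\cdot\Phi\circ f^n\,d\mu$ are therefore not correlations of $f$. The hypothesis on $f$ only sees fibre-constant observables and says nothing about them.

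The paper's own proof has the same hole. It disposes of exactly your residual term by asserting that on a finite fibre ``the correlation vanishes exactly (the integral of a function times a permutation is the product of integrals)''. That is false. On a two-point fibre where $\psi=\pm1$ with fibre mean $0$ and $\varphi\circ\tilde f^n=\psi$ there, the leafwise correlation is $1$, not $0$. Your decomposition is precisely what exposes this.

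A true version needs a hypothesis on the lift itself: that the fibre-mean-zero correlations $\int_{\tilde M}\psi_0\cdot\varphi_0\circ\tilde f^n\,d\tilde\mu$ are $O(\rho_n)$. This in particular forces $\tilde f$ to be mixing. In hyperbolic settings such a bound would come from a spectral gap for transfer operators twisted by the cocycle $\sigma_1$, and the resulting rate is governed by that twisted gap rather than by $\rho_n$. Given such a hypothesis, your decomposition together with Proposition \ref{prop:dichotomy} finishes the argument.
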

\begin{proof}
Take the foliation of $\tilde M$ by the fibres of $\pi$ (finite sets). The horizontal decay is zero because on a finite set the correlation vanishes exactly (the integral of a function times a permutation is the product of integrals). The vertical decay is that of $f$. Proposition \ref{prop:dichotomy} yields the bound. \end{proof}

\begin{corollary}\label{cor:expanding}
Let $F:X\to X$ be an Anosov diffeomorphism with exponential decay and $G:Y\to Y$ an expanding map with exponential decay. Then $F\times G$ has exponential decay. More generally, if $G_x$ varies continuously and each $G_x$ is $C^2$ expanding with uniform exponential decay, the skew product has exponential decay.
\end{corollary}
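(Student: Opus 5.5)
The plan is to deduce Corollary \ref{cor:expanding} from Proposition \ref{prop:dichotomy}, exactly as was done for Corollaries \ref{cor:product} and \ref{cor:skew}. The product case $F\times G$ is a special case of the skew-product case, so I would treat the skew product $f(x,y)=(F(x),G_x(y))$ on $X\times Y$. I would take the invariant foliation to be the one by vertical fibres $\{x\}\times Y$; its leaves are compact because $Y$ is compact, and its singular set is empty. The quotient is $X$ and the induced map is $F$. For the measure I would take $\mu=\hat\mu\otimes\nu$, where $\hat\mu$ is the volume preserved by $F$ and $\nu$ is the absolutely continuous invariant measure of the expanding fibre maps. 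For the product case this is just $\mu_1\otimes\mu_2$.

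\textbf{Vertical decay.} For Hölder $\psi$, the fibre average $\tilde\psi(x)=\int_Y\psi(x,y)\,d\nu(y)$ is Hölder on $X$ with the same exponent. The vertical correlation is then just the correlation of $F$ for the Hölder observables $\tilde\psi,\tilde\varphi$. Exponential decay for Anosov $F$ therefore gives $\gamma_n=e^{-an}$, with constant $C_{\tilde\psi,\tilde\varphi}\le C\|\psi\|_\alpha\|\varphi\|_\alpha$.

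\textbf{Horizontal decay.} On the leaf $\{x\}\times Y$, the $n$-cycle is the fibred composition $G_x^{(n)}=G_{F^{n-1}x}\circ\cdots\circ G_x$. The leafwise correlation is therefore a correlation for a sequential (non-autonomous) composition of expanding maps. In the product case $G_x=G$, this is literally the correlation of $G$, uniformly in $x$, so $\delta_n=e^{-bn}$ and Proposition \ref{prop:dichotomy} gives the rate $\max\{e^{-an},e^{-bn}\}$.

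\textbf{Main obstacle: the skew case.} Here exponential decay of each individual $G_x$ is not enough, because the horizontal decay concerns the compositions $G_x^{(n)}$. My approach is to use a common cone of log-Lipschitz (or Hölder) densities. Since the $G_x$ vary continuously and are uniformly $C^2$ expanding, there are uniform bounds on the expansion and on the distortion. A Lasota–Yorke inequality then holds uniformly for all transfer operators $\mathcal L_{G_x}$, so they all contract a common Birkhoff cone $\mathcal C_a$ with Hilbert-metric diameter $\Delta<\infty$. Birkhoff's theorem then gives
\[
\Bigl\|\mathcal L_{G^{(n)}_x}h-\textstyle\int h\,d\nu\Bigr\|\le C\,\theta^{\,n}\|h\|_\alpha,
\]
with $\theta=\tanh(\Delta/4)<1$ uniformly in $x$.

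There is one catch: the leafwise measures must be preserved along the cycle. I would handle this by using the equivariant family of measures $\mu_x$ produced by the same cone argument, namely $\mu_x=\lim_n \mathcal L_{G^{(n)}_{F^{-n}x}}1$, which satisfies $(G_x)_*\mu_x=\mu_{Fx}$. I would then work with $\mu=\int\mu_x\,d\hat\mu$ instead of a product measure. With this choice, horizontal decay holds with $\delta_n=\theta^n$. Applying Proposition \ref{prop:dichotomy} then yields exponential decay of correlations for the skew product.
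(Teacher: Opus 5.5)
Your product case is correct and follows the paper's route: Proposition \ref{prop:dichotomy} applied to the fibre foliation. (Neither $F\times G$ nor the skew product is a homeomorphism, but the proof of that proposition never uses invertibility, so this is harmless.) In the skew case you also correctly catch something the paper's one-line reduction to Corollary \ref{cor:skew} passes over. The leafwise cycle is $G_{F^{n-1}x}\circ\cdots\circ G_x$, so decay for each individual $G_x$ says nothing about horizontal decay. A uniform cone contraction for the sequential transfer operators is the right repair. Note that the limit of $\mathcal L_{G^{(n)}_x}h$ should be $(\int h\,dm)\,h_{F^nx}$, where $h_{F^nx}$ is the density of $\mu_{F^nx}$, and not $\int h\,d\nu$.

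The gap is on the vertical side. Once you replace $\hat\mu\otimes\nu$ by $\mu=\int\mu_x\,d\hat\mu$, the vertical observable becomes $\tilde\psi(x)=\int_Y\psi(x,y)\,d\mu_x(y)$. Your argument that $\tilde\psi$ is H\"older used a fixed fibre measure, so it no longer applies. The regularity of $\tilde\psi$ is now that of $x\mapsto\mu_x$, and $\mu_x$ is built from the entire backward orbit $F^{-1}x,F^{-2}x,\dots$. With $x\mapsto G_x$ merely continuous, you only get $\tilde\psi$ continuous, and the exponential decay of the Anosov map $F$ cannot be invoked.

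This is not cosmetic. The splitting $\Cor_\mu=\int\Delta\,d\hat\mu+\Delta_{\mathrm{vert}}$ in Proposition \ref{prop:dichotomy} is an identity, and you have shown the first term is $O(\theta^n)$. So exponential decay of $\Cor_\mu$ is \emph{equivalent} to exponential decay of $\Cor_{\hat\mu}(\tilde\psi,\tilde\varphi,F^n)$.

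To close the argument you need H\"older dependence of $x\mapsto G_x$ in $C^1$ (with uniform $C^2$ bounds), and then a proof that $x\mapsto h_x$ is H\"older:
\begin{enumerate}
\item Truncate $h_x$ at time $n$; the error is $O(\theta^n)$ by your cone estimate.
\item Compare the two $n$-fold compositions along the backward orbits of $x$ and $x'$ by telescoping; the error is $O(\Lambda^{n\beta}d(x,x')^{\beta})$ with $\Lambda=\operatorname{Lip}(F^{-1})$.
\item Choose $n\approx c\log(1/d(x,x'))$ to balance the two errors.
\end{enumerate}
This gives $\|\tilde\psi\|_{C^{\beta'}}\le C\|\psi\|_{\alpha}$ for some $\beta'>0$. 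Only then does the Anosov decay for $C^{\beta'}$ observables yield exponential vertical decay.
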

\begin{proof}
This is a special case of Corollary \ref{cor:skew}. \end{proof}

\section{Proof of Proposition \ref{prop:expanosov}}

\subsection{Pseudo-Anosov homeomorphisms.}
\begin{definition}[Pseudo-Anosov homeomorphism]
Let $S$ be a compact, orientable surface of genus $g \ge 1$ (possibly with boundary), and let 
\[
f \colon S \to S
\]
be a homeomorphism. We say that $f$ is \emph{pseudo-Anosov} if there exist a real number $\lambda > 1$ (the \emph{stretch factor}), two singular foliations $\mathcal{F}^s$ and $\mathcal{F}^u$ on $S$ (the \emph{stable} and \emph{unstable} foliations) and transverse measures $\mu_s$ on $\mathcal{F}^s$ and $\mu_u$ on $\mathcal{F}^u$ such that:

\begin{enumerate}
\item 
Each $\mathcal{F}^\ast$ ($\ast \in \{s,u\}$) is a singular foliation with a finite number of prong-type singularities.  Away from these singularities, the leaves of $\mathcal{F}^s$ are everywhere transverse to the leaves of $\mathcal{F}^u$.

\item  
The map $f$ sends each leaf of $\mathcal{F}^s$ to a leaf of $\mathcal{F}^s$, and likewise for $\mathcal{F}^u$.  In particular,
\[
f(\mathcal{F}^s) = \mathcal{F}^s,
\quad
f(\mathcal{F}^u) = \mathcal{F}^u.
\]

\item 
If $\alpha$ is any arc transverse to $\mathcal{F}^s$, then
\[
\mu_s\bigl(f(\alpha)\bigr) \;=\; \frac{1}{\lambda}\,\mu_s(\alpha).
\]
Similarly, if $\beta$ is any arc transverse to $\mathcal{F}^u$, then
\[
\mu_u\bigl(f(\beta)\bigr) \;=\; \lambda \,\mu_u(\beta).
\]
That is, $f$ contracts the stable foliation by factor $\lambda^{-1}$ and expands the unstable foliation by factor $\lambda$.
\end{enumerate}

A homeomorphism $f \colon S \to S$ is pseudo-Anosov precisely if it admits such a pair of transverse measured foliations $(\mathcal{F}^s,\mu_s)$, $(\mathcal{F}^u,\mu_u)$ with stretch factor $\lambda>1$.
\end{definition}

\medskip

\noindent
\textbf{Linear (Affine) Model.}
Once $f$ is known to be pseudo-Anosov, we can construct a (piecewise) Euclidean metric on $S$ (minus singularities) whose horizontal and vertical foliations match $\mathcal{F}^u$ and $\mathcal{F}^s$.  Under suitable local coordinates $\phi \colon U \to \mathbb{R}^2$ identifying a neighborhood $U \subset S$ with a domain in the plane, the foliations become straight lines, and $f$ is represented by the affine map
\[
(x,y) \;\mapsto\; (\lambda x,\, \lambda^{-1}y).
\]
Globally, $S$ can be obtained from finitely many rectangles (or polygons) in $\mathbb{R}^2$ whose edges are identified in pairs, producing a flat surface with cone-type singularities.  The map $f$ induces an identification of edges via appropriate affine transformations.  In this way, the pseudo-Anosov map $f$ is realized by a piecewise linear (affine) homeomorphism that stretches in one direction and contracts in the perpendicular direction by the factor $\lambda$.

Now, we want to prove that the leaves of the toral foliation constructed are mutually homeomorphic and can be arranged to have a bounded diameter.

Let $M$ be a closed manifold of dimension at least $4$ and fix an arbitrary Riemannian metric $g$ on $M$ and let $d_g$ denote the distance induced by the metric $g$. Recall that the diameter of a subset $A\subset M$ with respect to $g$ is defined as:
\[
\delta(A)=\sup\limits_{x,y\in A}d_g(x,y).
\]

To prove proposition \ref{prop:expanosov}, we need the followiyon technical Propostion.

\begin{definition}
    We say that a measure $\mu$ on a Riemannian manifold $(M,g)$ is subordinate to a foliation (possibly singular), if:
    \begin{enumerate}
\item $\mu(M) = 1$,
\item $\mu(K) = 0$,
\item for $\mu$-almost every point $x \in M \setminus K$, the disintegration of $\mu$ along the leaf through $x$ coincides with the normalized Riemannian volume measure on that leaf.
\end{enumerate}
\end{definition}

\begin{proposition}[Exxistnce of subordinate measure] \label{prop:singular-foliation-measure} Let \(M\) be a closed manifold, let \(K \subset M\) be a closed, saturated, nowhere dense subset, and set \[ \Mreg := M \setminus K. \] Assume that the restriction \[ \Freg := \F|_{\Mreg} \] is a regular foliation whose leaves are compact. Suppose moreover that its leaf space is homeomorphic to a punctured torus: \[ B := \Mreg/\Freg \cong \mathbb{T}^{2} \setminus \{p\}. \] Let \[ \pi \colon \Mreg \longrightarrow B \] be the quotient map, and for each \(b \in B\) write \[ L_b := \pi^{-1}(b). \] Suppose that, for every \(b \in B\), there is a Borel probability measure \(\nu_b\) on \(M\) such that \[ \nu_b(L_b)=1. \] Assume that the family \(\{\nu_b\}_{b \in B}\) varies weakly continuously; that is, for every \(f \in C(M)\), the map \[ B \longrightarrow \mathbb{R}, \qquad b \longmapsto \int_{L_b} f\,d\nu_b \] is continuous. Then, for every Borel probability measure \(\rho\) on \(B\), there exists a unique Borel probability measure \(\mu\) on \(M\) satisfying \[ \mu(K)=0 \] and \[ \int_M f\,d\mu = \int_B \left( \int_{L_b} f\,d\nu_b \right) d\rho(b) \] for every \(f \in C(M)\). Moreover, \[ \pi_*\bigl(\mu|_{\Mreg}\bigr)=\rho, \] and \(\{\nu_b\}_{b \in B}\) is a version of the disintegration of \(\mu\) along the leaves of \(\Freg\). More precisely, for every pair of Borel sets \(E \subset M\) and \(A \subset B\), \[ \mu\bigl(E \cap \pi^{-1}(A)\bigr) = \int_A \nu_b(E)\,d\rho(b). \] \end{proposition}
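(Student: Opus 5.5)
The plan is to build $\mu$ as a positive linear functional and then read off the disintegration. Define $\Lambda: C(M)\to\mathbb{R}$ by
\[
\Lambda(f)=\int_B\Bigl(\int_{L_b} f\,d\nu_b\Bigr)\,d\rho(b).
\]
The inner integral is continuous in $b$ by the weak continuity hypothesis. It is also bounded by $\|f\|_\infty$, so it is a bounded Borel function on $B$ and $\Lambda$ is well defined. Clearly $\Lambda$ is linear, positive, and satisfies $\Lambda(1)=1$. The Riesz representation theorem then gives a unique Borel (Radon, since $M$ is a compact metric space) probability measure $\mu$ with $\int f\,d\mu=\Lambda(f)$. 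Uniqueness in the statement follows at once, since a finite Borel measure on a compact metric space is determined by its integrals against $C(M)$.

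The next step extends the defining identity from continuous functions to Borel sets. Define $\mu'(E)=\int_B\nu_b(E)\,d\rho(b)$ for Borel $E\subset M$. To make sense of this, I first need $b\mapsto\nu_b(E)$ to be Borel measurable. I would get this by a monotone class argument. For open $E$, $\nu_b(E)=\sup_k\int g_k\,d\nu_b$ for an increasing sequence of continuous $g_k\uparrow 1_E$, so $b\mapsto\nu_b(E)$ is a supremum of continuous functions and hence lower semicontinuous. The class of Borel sets $E$ for which $b\mapsto\nu_b(E)$ is measurable is a Dynkin system containing the $\pi$-system of open sets, so it contains all Borel sets. By monotone convergence $\mu'$ is then a Borel probability measure that agrees with $\mu$ on $C(M)$, so $\mu'=\mu$.

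Repeating the same argument with $E\cap\pi^{-1}(A)$ gives the mixed formula. Here I use that $\nu_b(\pi^{-1}(A))=1_A(b)$, because $\nu_b$ is concentrated on $L_b$ and $L_b\subset\pi^{-1}(A)$ exactly when $b\in A$. Concretely, $\nu_b(E\cap\pi^{-1}(A))=1_A(b)\,\nu_b(E)$, so integrating against $\rho$ yields
\[
\mu\bigl(E\cap\pi^{-1}(A)\bigr)=\int_A\nu_b(E)\,d\rho(b).
\]
This needs $\pi^{-1}(A)$ to be Borel in $M$, which holds because $\pi$ is continuous on the open set $\Mreg$.

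The remaining conclusions follow by specializing $E$ and $A$:
\begin{itemize}
\item Taking $E=K$ gives $\mu(K)=\int\nu_b(K)\,d\rho=0$, since $K\cap L_b=\emptyset$.
\item Taking $E=M$ gives $\mu(\pi^{-1}(A))=\rho(A)$, which is $\pi_*(\mu|_{\Mreg})=\rho$.
\item The general formula, together with $\nu_b(L_b)=1$, is exactly the statement that $\{\nu_b\}$ is a version of the Rokhlin disintegration of $\mu$ along $\Freg$. By essential uniqueness of disintegrations, it agrees $\rho$-a.e. with any other version.
\end{itemize}

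The main obstacle I expect is the measurability of $b\mapsto\nu_b(E)$ for general Borel $E$. Weak continuity only controls continuous test functions, so the passage through open sets and the $\pi$–$\lambda$ theorem must be done carefully. I would handle it first, and the rest is bookkeeping.
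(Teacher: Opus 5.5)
Your proof is correct and follows the paper's route. Both arguments apply Riesz--Markov to $\Lambda$, extend the identity from $C(M)$ to Borel sets by a monotone class argument, use $\nu_b(E\cap\pi^{-1}(A))=\mathbf{1}_A(b)\,\nu_b(E)$, and then take $E=M$.

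The differences are minor. You make explicit the measurability of $b\mapsto\nu_b(E)$, via lower semicontinuity on open sets and Dynkin's theorem; the paper folds this into its appeal to the functional monotone class theorem. You also get $\mu(K)=0$ directly from the set identity with $E=K$, whereas the paper uses a separate cutoff argument with $f_n=\max\{1-n\,d(x,K),0\}$.
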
 \begin{proof} Define a linear functional \[ \Lambda \colon C(M) \longrightarrow \mathbb{R} \] by \[ \Lambda(f) := \int_B \left( \int_{L_b} f\,d\nu_b \right) d\rho(b). \] By the weak continuity of the family \(\{\nu_b\}_{b \in B}\), the function \[ b \longmapsto \int_{L_b} f\,d\nu_b \] is continuous, and therefore Borel measurable. Since each \(\nu_b\) is a probability measure, we have \[ \left| \int_{L_b} f\,d\nu_b \right| \leq \|f\|_{\infty}. \] Consequently, \[ |\Lambda(f)| \leq \|f\|_{\infty}, \] so \(\Lambda\) is bounded. The functional \(\Lambda\) is also positive. Indeed, if \(f \geq 0\), then \[ \Lambda(f) \geq 0. \] Moreover, \[ \begin{aligned} \Lambda(1) &= \int_B \left( \int_{L_b} 1\,d\nu_b \right) d\rho(b) \\ &= \int_B 1\,d\rho(b) \\ &= 1. \end{aligned} \] The Riesz--Markov representation theorem therefore yields a unique Borel probability measure \(\mu\) on \(M\) such that \[ \Lambda(f) = \int_M f\,d\mu \] for every \(f \in C(M)\). Thus, \[ \int_M f\,d\mu = \int_B \left( \int_{L_b} f\,d\nu_b \right) d\rho(b). \] We now show that \(\mu(K)=0\). Fix a Riemannian metric on \(M\), and let \(d\) denote the associated distance function. For every \(n \geq 1\), define \[ f_n(x) := \max\bigl\{1-n\,d(x,K),\,0\bigr\}. \] Then \[ 0 \leq f_n \leq 1 \] and \[ \mathbf{1}_K \leq f_n. \] For every \(b \in B\), the compact leaf \(L_b\) is disjoint from the closed set \(K\). Hence, \[ d(L_b,K)>0. \] It follows that \[ f_n|_{L_b} \longrightarrow 0 \] uniformly as \(n \to \infty\). Therefore, \[ \int_{L_b} f_n\,d\nu_b \longrightarrow 0 \] for every \(b \in B\). Since \[ 0 \leq \int_{L_b} f_n\,d\nu_b \leq 1, \] the dominated convergence theorem gives \[ \begin{aligned} \int_M f_n\,d\mu &= \int_B \left( \int_{L_b} f_n\,d\nu_b \right) d\rho(b) \\ &\longrightarrow 0. \end{aligned} \] Because \(\mathbf{1}_K \leq f_n\), we obtain \[ \mu(K) \leq \int_M f_n\,d\mu. \] Letting \(n \to \infty\), we conclude that \[ \mu(K)=0. \] By the functional monotone class theorem, the identity \[ \int_M f\,d\mu = \int_B \left( \int_{L_b} f\,d\nu_b \right) d\rho(b) \] extends from continuous functions to all bounded Borel functions \(f \colon M \to \mathbb{R}\). Let \(E \subset M\) and \(A \subset B\) be Borel sets. Since \(\nu_b\) is supported on \[ L_b=\pi^{-1}(b), \] we have \[ \nu_b\bigl(E \cap \pi^{-1}(A)\bigr) = \mathbf{1}_A(b)\,\nu_b(E). \] Therefore, \[ \begin{aligned} \mu\bigl(E \cap \pi^{-1}(A)\bigr) &= \int_B \nu_b\bigl(E \cap \pi^{-1}(A)\bigr) \,d\rho(b) \\ &= \int_B \mathbf{1}_A(b)\,\nu_b(E) \,d\rho(b) \\ &= \int_A \nu_b(E)\,d\rho(b). \end{aligned} \] This is precisely the disintegration identity. Taking \(E=M\), we obtain \[ \begin{aligned} \mu\bigl(\pi^{-1}(A)\bigr) &= \int_A \nu_b(M)\,d\rho(b) \\ &= \int_A 1\,d\rho(b) \\ &= \rho(A). \end{aligned} \] Hence, \[ \pi_*\bigl(\mu|_{\Mreg}\bigr)=\rho. \] This completes the proof. \end{proof} \begin{remark}[Normalized leafwise Riemannian volume] \label{rem:normalized-leafwise-volume} Fix a Riemannian metric \(g\) on \(M\). For every \(b \in B\), let \[ \vol_{L_b} \] denote the Riemannian volume measure induced by \(g\) on the compact leaf \(L_b\), and define \[ \nu_b := \frac{\vol_{L_b}} {\vol_{L_b}(L_b)}. \] Then \(\nu_b\) is a Borel probability measure supported on \(L_b\). Suppose, in addition, that \[ \pi \colon \Mreg \longrightarrow B \] is a proper smooth submersion. By Ehresmann's fibration theorem, \(\pi\) is a smooth locally trivial fiber bundle. Thus, for every \(b_0 \in B\), there exist an open neighborhood \(U \subset B\) of \(b_0\), a compact manifold \(F\), and a diffeomorphism \[ \Phi \colon \pi^{-1}(U) \longrightarrow U \times F \] such that \[ \pi = \operatorname{pr}_1 \circ \Phi. \] With respect to this trivialization, the Riemannian volume measure on the fiber \(L_b\) has the form \[ J(b,z)\,d\vol_F(z), \] where \[ J \colon U \times F \longrightarrow (0,\infty) \] is smooth. Consequently, for every \(f \in C(M)\), \[ \int_{L_b} f\,d\nu_b = \frac{ \displaystyle \int_F f\bigl(\Phi^{-1}(b,z)\bigr) J(b,z)\,d\vol_F(z) }{ \displaystyle \int_F J(b,z)\,d\vol_F(z) }. \] Because \(F\) is compact and \(J\) is smooth and positive, the right-hand side depends continuously on \(b\). Hence the normalized leafwise Riemannian measures form a weakly continuous family, and the proposition applies. \end{remark}

\begin{proof}[Proof of Proposition \ref{prop:expanosov}]
Let $f$ be relatively Anosov with respect to $\F$. We apply Proposition \ref{prop:dichotomy}. The vertical map $F$ is a linear Anosov diffeomorphism on $\mathbb{T}^{n-2}$, hence it has exponential decay: there exist $C_1>0$, $\gamma>0$ such that for all H\"older $\tilde\psi,\tilde\varphi$,
\[
|\Cor_{\hat\mu}(\tilde\psi,\tilde\varphi,F^n)|\le C_1 e^{-\gamma n}\|\tilde\psi\|_\alpha\|\tilde\varphi\|_\alpha.
\]

For the horizontal decay, fix a leaf $l$. By definition of relatively Anosov, $f^n|_l = \phi_{l_n}^{-1}\circ A^n\circ \phi_l$, where $l_n=f^n(l)$. Then for any $\psi,\varphi\in\mathcal{H}$,
\[
\int_l \psi|_l\cdot(\varphi|_{l_n}\circ f^n|_l)\,d\mu_l = \int_\Sigma (\psi\circ\phi_l^{-1})\cdot(\varphi\circ\phi_{l_n}^{-1}\circ A^n)\,d\operatorname{area}_\Sigma.
\]
Because $\phi_l$ and $\phi_{l_n}$ are uniformly $C^1$ with bounded inverses, the functions $\psi\circ\phi_l^{-1}$ and $\varphi\circ\phi_{l_n}^{-1}$ are H\"older with norms bounded by $C\|\psi\|_\alpha$ and $C\|\varphi\|_\alpha$ for some $C$ depending only on $L$. The pseudo-Anosov map $A$ has exponential decay of correlations: there exist $C_2>0$, $\delta>0$ such that for all H\"older $u,v$,
\[
\bigl|\int_\Sigma u\cdot(v\circ A^n)\,d\operatorname{area}_\Sigma - \int_\Sigma u\,d\operatorname{area}_\Sigma\int_\Sigma v\,d\operatorname{area}_\Sigma\bigr| \le C_2 e^{-\delta n}\|u\|_\alpha\|v\|_\alpha.
\]
Applying this with $u=\psi\circ\phi_l^{-1}$, $v=\varphi\circ\phi_{l_n}^{-1}$ and noting that $\int_l\psi|_l\,d\mu_l = \int_\Sigma u\,d\operatorname{area}_\Sigma$ and similarly for $\varphi$, we obtain
\[
|\Cor_{\mu_l}(\psi|_l,\varphi|_{l_n},f^n)|\le C_2 e^{-\delta n} C^2 \|\psi\|_\alpha\|\varphi\|_\alpha.
\]
Thus the horizontal decay rate is $\delta_n = C_2' e^{-\delta n}$, uniform over all leaves. Proposition \ref{prop:dichotomy} then gives
\[
|\Cor_\mu(\psi,\varphi,f^n)|\le C_{\psi,\varphi}\max\{e^{-\gamma n},e^{-\delta n}\}=C_{\psi,\varphi}e^{-\min(\gamma,\delta)n},
\]
which is exponential. \end{proof}

\section{Proof of Theorem \ref{thm:main2} and corollaries}

\subsection{Definitions}
Let $F$ be a closed smooth manifold. The group $\Diff(F)$ is the group of all smooth diffeomorphisms $F\to F$, with operation given by composition. If $x_{0}\in F$, we write
\[
\Diff(F,x_{0})
=
\{\varphi\in \Diff(F)\mid \varphi(x_{0})=x_{0}\}.
\]

\begin{definition}
Let $F=\Sigma_{g}$ be a closed orientable surface of genus $g\ge 2$. Its fundamental group
\[
\pi_{1}(\Sigma_{g})
\]
is called a surface group. A standard presentation is
\[
\pi_{1}(\Sigma_{g})
=
\left\langle
a_{1},b_{1},\dots,a_{g},b_{g}
\ \middle|\ 
\prod_{i=1}^{g}[a_{i},b_{i}]=1
\right\rangle .
\]
\end{definition}

\begin{definition}
Let $H$ be a group and let
\[
\phi:\mathbb Z^{2}\to \Aut(H)
\]
be an action by automorphisms. The semidirect product
\[
H\rtimes_{\phi}\mathbb Z^{2}
\]
is the group whose underlying set is $H\times \mathbb Z^{2}$, with multiplication
\[
(h,v)(h',v')
=
\bigl(h\,\phi(v)(h'),\,v+v'\bigr).
\]
\end{definition}

\begin{definition}
A group $G$ is virtually solvable if it contains a solvable subgroup of finite index. It is virtually nilpotent if it contains a nilpotent subgroup of finite index.
\end{definition}

\begin{definition}
A smooth fiber bundle with fiber $F$ over a smooth manifold $B$ is a smooth submersion
\[
p:E\to B
\]
such that every point of $B$ has a neighborhood $U$ for which there is a diffeomorphism
\[
p^{-1}(U)\cong U\times F
\]
commuting with the projections to $U$.
\end{definition}

\begin{definition}
A nilmanifold is a quotient
\[
N/\Gamma,
\]
where $N$ is a simply connected nilpotent Lie group and $\Gamma\subset N$ is a discrete cocompact subgroup.
\end{definition}

\begin{definition}
A diffeomorphism $f:M\to M$ of a closed smooth manifold is Anosov if there exists a $Df$-invariant splitting
\[
TM=E^{s}\oplus E^{u}
\]
and constants $C>0$, $0<\lambda<1$, such that for all $n\ge 0$,
\[
\|Df^{n}(v)\|\le C\lambda^{n}\|v\|
\quad\text{for }v\in E^{s},
\]
and
\[
\|Df^{-n}(v)\|\le C\lambda^{n}\|v\|
\quad\text{for }v\in E^{u}.
\]
\end{definition}

\begin{definition}
A diffeomorphism $f:M\to M$ is strongly partially hyperbolic if there is a $Df$-invariant splitting
\[
TM=E^{s}\oplus E^{c}\oplus E^{u}
\]
such that $E^{s}$ is uniformly contracted, $E^{u}$ is uniformly expanded, and the center bundle $E^{c}$ has intermediate behavior. Equivalently, after choosing an adapted Riemannian metric, one has
\[
\|Df(v^{s})\|<\|Df(v^{c})\|<\|Df(v^{u})\|
\]
for all unit vectors $v^{\sigma}\in E^{\sigma}$, $\sigma=s,c,u$, together with
\[
\|Df|_{E^{s}}\|<1<\mathfrak m(Df|_{E^{u}}),
\]
where $\mathfrak m(A)=\|A^{-1}\|^{-1}$ denotes the conorm.
\end{definition}

\begin{remark}
There are several closely related definitions of partial hyperbolicity in the literature. The strong version above is the one used in the final corollary, where the center direction is assumed to be exactly the vertical tangent bundle of a surface fibration.
\end{remark}

\subsection{Proof of Theorem \ref{thm:main2}}
\begin{proof}[Proof of Theorem \ref{thm:main2}]
Identify $T^{2}$ with $\mathbb R^{2}/\mathbb Z^{2}$. For $v\in \mathbb Z^{2}$, define an action of $\mathbb Z^{2}$ on $\mathbb R^{2}\times F$ by
\[
v\cdot(u,x)
=
(u+v,\rho(v)(x)).
\]
The action is free and properly discontinuous because the translation action on $\mathbb R^{2}$ is free and properly discontinuous. Therefore the quotient
\[
M=(\mathbb R^{2}\times F)/\mathbb Z^{2}
\]
is a closed smooth $4$-manifold. Projection to the first factor induces a smooth fiber bundle
\[
p:M\longrightarrow T^{2}
\]
with fiber $F$.

Since $x_{0}$ is fixed by all elements of the action, the formula
\[
s([u])=[u,x_{0}]
\]
defines a smooth section
\[
s:T^{2}\to M.
\]
The homotopy exact sequence of the fibration gives
\[
1\longrightarrow \pi_{1}(F,x_{0})
\longrightarrow \pi_{1}(M,s(\bar 0))
\longrightarrow \pi_{1}(T^{2},\bar 0)
\longrightarrow 1.
\]
The section $s$ splits this sequence. Thus $\pi_{1}(M)$ is a semidirect product of $\pi_{1}(F,x_{0})$ by $\pi_{1}(T^{2})\cong \mathbb Z^{2}$.

It remains to identify the action. Let $e_{1}=(1,0)$ and $e_{2}=(0,1)$. The standard loops in $T^{2}$ representing $e_{1},e_{2}$ lift through the section to the paths
\[
t\longmapsto (te_{i},x_{0})
\qquad
0\le t\le 1
\]
in $\mathbb R^{2}\times F$. Conjugating a loop in the fiber by this lifted loop transports it to the fiber over $e_{i}$, and then identifies that fiber back with the fiber over $0$ using the deck transformation $e_{i}^{-1}$. By the definition of the diagonal action, this identification acts on the fiber by $\rho(e_{i})$. Therefore the conjugation action of $\mathbb Z^{2}$ on $\pi_{1}(F,x_{0})$ is precisely the action induced by $\rho$. Hence
\[
\pi_{1}(M)\cong \pi_{1}(F,x_{0})\rtimes_{\rho_*}\mathbb Z^{2}.
\]

We now construct the singular map. Choose a smoothly embedded closed disk
\[
D\subset T^{2}
\]
with center $q$. Since $D$ is contractible, the bundle $p:M\to T^{2}$ is trivial over $D$. Fix a smooth bundle trivialization
\[
\Theta:p^{-1}(D)\xrightarrow{\cong}D\times F
\]
covering the identity on $D$.

We construct a smooth map
\[
\chi:D\to D
\]
which is the identity near $\partial D$, satisfies $\chi^{-1}(q)=q$, has rank $2$ away from $q$, and has rank $0$ at $q$. Choose a coordinate chart
\[
\vartheta:D\to \overline{\mathbb D}
=
\{x\in \mathbb R^{2}\mid \|x\|\le 1\}
\]
with $\vartheta(q)=0$. Let $z:[0,1]\to[0,1]$ be a smooth function such that
\[
z(r)=r^{3}\quad\text{for }r\le \frac13,
\qquad
z(r)=r\quad\text{for }r\ge \frac23,
\qquad
z'(r)>0\quad\text{for }r>0.
\]
Define
\[
\Xi:\overline{\mathbb D}\to \overline{\mathbb D}
\]
by
\[
\Xi(x)=
\begin{cases}
\dfrac{z(\|x\|)}{\|x\|}x,&x\ne 0,\\[1em]
0,&x=0.
\end{cases}
\]
Near $0$, this is $\Xi(x)=\|x\|^{2}x$, hence it is smooth. Near the boundary it is the identity. In polar coordinates, $\Xi$ is
\[
(r,\theta)\longmapsto(z(r),\theta).
\]
For $r>0$, its radial derivative is $z'(r)>0$, and its tangential derivative is $z(r)/r>0$. Thus $D\Xi$ has rank $2$ away from $0$. At $0$, $D\Xi_{0}=0$, since $\Xi(x)=O(\|x\|^{3})$.

Set
\[
\chi=\vartheta^{-1}\circ \Xi\circ \vartheta.
\]
Then $\chi$ has the claimed properties.

Define
\[
f:M\to T^{2}
\]
by
\[
f(x)=
\begin{cases}
p(x),&\text{on }M\setminus p^{-1}(\operatorname{int}D),\\[0.4em]
\chi\circ \operatorname{pr}_{1}\circ \Theta(x),&\text{on }p^{-1}(D).
\end{cases}
\]
Because $\chi$ is the identity near $\partial D$, these two definitions agree on an open neighborhood of $p^{-1}(\partial D)$. Therefore $f$ is a well-defined smooth map.

On $M\setminus p^{-1}(D)$, the map $f$ equals the bundle projection $p$ and is a submersion. On $p^{-1}(D)\cong D\times F$, the map is $(d,x)\mapsto \chi(d)$. Its differential has rank $2$ exactly when $d\ne q$, and has rank $0$ when $d=q$. Consequently the critical set of $f$ is precisely
\[
K=\Theta^{-1}(\{q\}\times F).
\]
This is a closed embedded surface diffeomorphic to $F$, and $f(K)=q$. The theorem follows.
\end{proof}

\begin{remark}
The fixed-point hypothesis in \ref{thm:main2} is used only to write the fundamental group canonically as a semidirect product. The construction of the map $f$ works for every genuine smooth action $\mathbb Z^{2}\to \Diff(F)$. Without a common fixed point, the natural group-theoretic object is the corresponding extension
\[
1\to \pi_{1}(F)\to \pi_{1}(M)\to \mathbb Z^{2}\to 1
\]
determined by the induced outer action on $\pi_{1}(F)$.
\end{remark}

\subsection{Proof of Corollary \ref{cor:dehn}}
\begin{lemma}\label{lem:ab}
Let $H$ be a group and let
\[
\phi:\mathbb Z^{2}\to \Aut(H)
\]
be an action. Then
\[
(H\rtimes_{\phi}\mathbb Z^{2})_{\mathrm{ab}}
\cong
H_{\mathrm{ab}}
\big/
\left\langle
\phi(v)_{*}(x)-x
\mid v\in \mathbb Z^{2},\ x\in H_{\mathrm{ab}}
\right\rangle
\oplus \mathbb Z^{2}.
\]
Equivalently, if $H=\pi_{1}(F)$, then
\[
H_{1}(H\rtimes_{\phi}\mathbb Z^{2};\mathbb Z)
\cong
H_{1}(F;\mathbb Z)_{\mathbb Z^{2}}\oplus \mathbb Z^{2},
\]
where $H_{1}(F;\mathbb Z)_{\mathbb Z^{2}}$ denotes the group of coinvariants.
\end{lemma}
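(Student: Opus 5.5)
The plan is to compute the abelianization of $G=H\rtimes_{\phi}\mathbb Z^{2}$ directly from its universal property. First we describe the commutator subgroup $[G,G]$. Then we identify the quotient with a direct sum.

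\textbf{Step 1: a presentation of $G$.} The group $G$ is generated by $H$ (embedded as $h\mapsto (h,0)$) and by $t_1=(1,e_1)$, $t_2=(1,e_2)$. It has three families of relations:
\begin{enumerate}[label=(\alph*)]
\item the relations of $H$;
\item the relation $[t_1,t_2]=1$;
\item the conjugation relations $t_i h t_i^{-1}=\phi(e_i)(h)$.
\end{enumerate}
This is the standard presentation of a semidirect product. It follows from the multiplication rule $(h,v)(h',v')=(h\,\phi(v)(h'),v+v')$ together with the normal form $(h,v)=(h,0)(1,v)$.

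\textbf{Step 2: abelianize the presentation.} Abelianizing means adjoining the commutativity of all generators. Relations (a) turn into $H_{\mathrm{ab}}$. Relation (b) becomes vacuous, so the $t_i$ generate a free abelian group $\mathbb Z^{2}$. Relations (c) become $\bar h=\phi(e_i)_*(\bar h)$ in $H_{\mathrm{ab}}$, where $\phi(v)_*$ denotes the induced automorphism of $H_{\mathrm{ab}}$.

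So $G_{\mathrm{ab}}$ is the abelian group generated by $H_{\mathrm{ab}}$ and $\mathbb Z^{2}$, subject to $\phi(e_i)_*(x)-x=0$. The $\mathbb Z^{2}$ generators appear in no relation. Hence
\[
G_{\mathrm{ab}}\cong H_{\mathrm{ab}}/\langle \phi(e_i)_*(x)-x\rangle\oplus\mathbb Z^{2}.
\]

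\textbf{Step 3: match the subgroup in the statement.} The statement's subgroup uses all $v\in\mathbb Z^{2}$, not only the generators $e_i$. The two subgroups agree: from
\[
\phi(v+w)_*x-x=\phi(v)_*\bigl(\phi(w)_*x-x\bigr)+\bigl(\phi(v)_*x-x\bigr),
\]
together with the analogous identity for inverses, the span over the generators already contains every $\phi(v)_*x-x$. This is exactly the coinvariant module $(H_{\mathrm{ab}})_{\mathbb Z^{2}}$.

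\textbf{Step 4: the homological form.} For $H=\pi_1(F)$, Hurewicz gives $H_{\mathrm{ab}}\cong H_1(F;\mathbb Z)$, and $\phi(v)_*$ is the map on $H_1$ induced by $\rho(v)$. This yields the second formula.

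As a check that avoids presentations, one can alternatively verify the universal property. A homomorphism from $G$ to an abelian group $B$ is the same as a pair: a homomorphism $\alpha:H_{\mathrm{ab}}\to B$ that is $\phi$-invariant, and an arbitrary homomorphism $\beta:\mathbb Z^{2}\to B$. This is a routine check.

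The only slightly delicate point is justifying the presentation in Step 1, namely that relations (c) suffice. I would do this by exhibiting the inverse map from the presented group to $G$ via the normal form, which makes the argument essentially routine.
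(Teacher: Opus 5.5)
Your argument is correct and follows the same idea as the paper: the conjugation relation $(1,v)(h,0)(1,-v)=(\phi(v)(h),0)$ becomes the coinvariance relation $\phi(v)_*(x)=x$ in $G_{\mathrm{ab}}$, while the $\mathbb Z^{2}$ factor splits off freely. Your version is more complete than the paper's. The presentation in Step 1 (or the universal-property check) justifies the paper's bare assertion that no further relations arise, and Step 3 correctly reconciles your use of the generators $e_1,e_2$ with the statement's use of all $v\in\mathbb Z^{2}$.
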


\begin{proof}
Let $G=H\rtimes_{\phi}\mathbb Z^{2}$. For $v\in \mathbb Z^{2}$ and $h\in H$, one has
\[
(1,v)(h,0)(1,-v)=(\phi(v)(h),0).
\]
In the abelianization of $G$, this relation becomes
\[
[\phi(v)(h)]=[h].
\]
Thus the image of $H_{\mathrm{ab}}$ in $G_{\mathrm{ab}}$ is exactly the quotient of $H_{\mathrm{ab}}$ by the relations
\[
\phi(v)_{*}(x)=x.
\]
The quotient $\mathbb Z^{2}$ is already abelian, and no further relation is introduced between its two generators. This gives the asserted decomposition.
\end{proof}

\begin{proof}[Proof of Corollary \ref{cor:dehn}]
The Dehn twists commute because their supporting annuli are disjoint. Therefore $\rho_{n}$ is a well-defined action of $\mathbb Z^{2}$.

The manifold $M_{n}$ is the total space of the flat bundle
\[
\Sigma_{g}\hookrightarrow M_{n}\to T^{2}.
\]
Pulling this bundle back to the universal cover $\mathbb R^{2}\to T^{2}$ gives a trivial bundle over $\mathbb R^{2}$. The universal cover of $M_{n}$ is therefore diffeomorphic to
\[
\mathbb R^{2}\times \widetilde{\Sigma}_{g}.
\]
Since $g\ge 2$, $\widetilde{\Sigma}_{g}$ is contractible. Hence $M_{n}$ is aspherical.

Choose a symplectic basis
\[
a_{1},b_{1},\dots,a_{g},b_{g}
\]
of $H_{1}(\Sigma_{g};\mathbb Z)$ such that
\[
[\alpha]=a_{1},
\qquad
[\beta]=a_{2}.
\]
The action of a Dehn twist on first homology is given by
\[
(\tau_{\gamma})_{*}(x)=x+(x\cdot [\gamma])[\gamma],
\]
where $\cdot$ denotes the algebraic intersection pairing. Hence
\[
(\tau_{\alpha}^{n})_{*}(b_{1})=b_{1}+n a_{1},
\qquad
(\tau_{\alpha}^{n})_{*}(x)=x
\quad\text{for all other basis vectors},
\]
and
\[
(\tau_{\beta})_{*}(b_{2})=b_{2}+a_{2},
\qquad
(\tau_{\beta})_{*}(x)=x
\quad\text{for all other basis vectors}.
\]
Thus the coinvariant quotient of $H_{1}(\Sigma_{g};\mathbb Z)$ is obtained by imposing the relations
\[
n a_{1}=0,
\qquad
a_{2}=0.
\]
Therefore
\[
H_{1}(\Sigma_{g};\mathbb Z)_{\mathbb Z^{2}}
\cong
\mathbb Z^{2g-2}\oplus \mathbb Z/n.
\]
By \ref{lem:ab},
\[
H_{1}(M_{n};\mathbb Z)
\cong
H_{1}(\Sigma_{g};\mathbb Z)_{\mathbb Z^{2}}\oplus \mathbb Z^{2}
\cong
\mathbb Z^{2g}\oplus \mathbb Z/n.
\]

If $m\ne n$, then the torsion subgroups of $H_{1}(M_{m};\mathbb Z)$ and $H_{1}(M_{n};\mathbb Z)$ have different orders. Hence the first homology groups are not isomorphic. Therefore the fundamental groups are not isomorphic. Since the manifolds are aspherical, a homotopy equivalence between $M_{m}$ and $M_{n}$ would induce an isomorphism of their fundamental groups. Thus the manifolds are pairwise not homotopy equivalent.
\end{proof}

\subsection{A family with no Anosov diffeomorphisms}

\begin{proof}[Proof of Corollary \ref{cor:anosov}]
The second generator of $\mathbb Z^{2}$ acts trivially. Hence the flat bundle over
\[
T^{2}=S^{1}\times S^{1}
\]
is the product of the mapping torus of $A_{n}$ with the second circle. Thus
\[
X_{n}\cong N_{n}\times S^{1}.
\]

Since $A_{n}$ is a nontrivial unipotent element of $SL(2,\mathbb Z)$, its mapping torus $N_{n}$ is a compact quotient of the three-dimensional Heisenberg group. Therefore $X_{n}$ is a compact nilmanifold modeled on $Nil^{3}\times \mathbb R$.

We show that $X_{n}$ admits no Anosov diffeomorphism. By the Franks--Manning theory for Anosov diffeomorphisms on nilmanifolds, an Anosov diffeomorphism on a nilmanifold is topologically conjugate to a hyperbolic automorphism of the underlying nilmanifold. Thus it suffices to observe that the nilpotent Lie algebra of $Nil^{3}\times \mathbb R$ admits no hyperbolic lattice-preserving automorphism.

Let $\mathfrak n$ be the Lie algebra with basis $X,Y,Z,T$ and only nontrivial bracket
\[
[X,Y]=Z.
\]
The center is spanned by $Z,T$. If $\Phi$ is an automorphism of $\mathfrak n$, then the induced map on $\mathfrak n/[\mathfrak n,\mathfrak n]$ restricts on the span of $X,Y$ to some invertible $2\times 2$ block $C$. Since
\[
\Phi(Z)=\Phi([X,Y])=[\Phi(X),\Phi(Y)],
\]
the coefficient of $Z$ in $\Phi(Z)$ is $\det C$. For a lattice-preserving automorphism, this determinant is $\pm 1$. Hence the induced map on the center has an eigenvalue of modulus $1$. Therefore no such automorphism is hyperbolic. It follows that $X_{n}$ admits no Anosov diffeomorphism.

Finally,
\[
\pi_{1}(X_{n})
\cong
(\mathbb Z^{2}\rtimes_{A_{n}}\mathbb Z)\times \mathbb Z.
\]
Applying \ref{lem:ab} to $\mathbb Z^{2}\rtimes_{A_{n}}\mathbb Z$, one obtains
\[
H_{1}(N_{n};\mathbb Z)\cong \mathbb Z^{2}\oplus \mathbb Z/n.
\]
The extra $S^{1}$-factor contributes one more free generator, so
\[
H_{1}(X_{n};\mathbb Z)\cong \mathbb Z^{3}\oplus \mathbb Z/n.
\]
The torsion subgroups distinguish the homotopy types.
\end{proof}


\begin{thebibliography}{99}

\bibitem{Rokhlin} V. A. Rokhlin, ``On the fundamental ideas of measure theory'', Mat. Sb. (1949).

\bibitem{Sinai1968} Ya. G. Sinai, ``Markov partitions and $C$-diffeomorphisms'', Funkts. Anal. Prilozh. 2 (1968), 64--89.

\bibitem{Bowen1970} R. Bowen, ``Markov partitions for Axiom A diffeomorphisms'', Amer. J. Math. 92 (1970), 725--747.

\bibitem{Ruelle1978} D. Ruelle, \emph{Thermodynamic Formalism}, Addison-Wesley, 1978.

\bibitem{Young1998} L. S. Young, ``Statistical properties of dynamical systems with some hyperbolicity'', Ann. of Math. 147 (1998), 585--650.

\bibitem{Dolgopyat1998} D. Dolgopyat, ``On decay of correlations in Anosov flows'', Ann. of Math. 147 (1998), 357--390.

\bibitem{Liverani1995} C. Liverani, ``Decay of correlations for piecewise expanding maps'', J. Stat. Phys. 78 (1995), 1111--1129.

\bibitem{AvilaGouezel} A. Avila and S. Gouëzel, ``Small eigenvalues of the Laplacian for algebraic measures in dimension two'', Ann. Sci. Éc. Norm. Supér. 46 (2013), 735--775.

\bibitem{BurnsDolgopyat} K. Burns and D. Dolgopyat, ``Exponential mixing for time changes of geodesic flows'', J. Mod. Dyn. 6 (2012), 497--525.

\bibitem{FathiLaudenbachPoenaru} A. Fathi, F. Laudenbach, and V. Poénaru, \emph{Travaux de Thurston sur les surfaces}, Astérisque 66-67 (1979).

\bibitem{BestvinaHandel} M. Bestvina and M. Handel, ``Train tracks for surface homeomorphisms'', Topology 34 (1995), 109--140.

\bibitem{Anosov67} D. V. Anosov, ``Geodesic flows on closed Riemannian manifolds of negative curvature'', Proc. Steklov Inst. Math. 90 (1967).

\bibitem{Baladi2000} V. Baladi, \emph{Positive Transfer Operators and Decay of Correlations}, World Scientific, 2000.

\bibitem{ChernovMarkarian} N. Chernov and R. Markarian, \emph{Chaotic Billiards}, AMS, 2006.

\bibitem{KatokHasselblatt} A. Katok and B. Hasselblatt, \emph{Introduction to the Modern Theory of Dynamical Systems}, Cambridge Univ. Press, 1995.

\bibitem{PollicottSharp} M. Pollicott and R. Sharp, ``Exponential error terms for growth functions on negatively curved surfaces'', Amer. J. Math. 116 (1994), 1329--1371.

\bibitem{Pesin} Y. B. Pesin, \textit{Dimension Theory in Dynamical Systems}, University of Chicago Press, Chicago, 1997.

\bibitem{Smale} S. Smale, Differentiable dynamical systems, \textit{Bull. Amer. Math. Soc.} \textbf{73} (1967), 747--817.

\bibitem{BrinPesin} M. Brin and Y. B. Pesin, Partially hyperbolic dynamical systems, \textit{Izv. Akad. Nauk SSSR Ser. Mat.} \textbf{38} (1974), 170--212.

\bibitem{Dolgopyat} D. Dolgopyat, On decay of correlations in Anosov flows, \textit{Ann. of Math. (2)} \textbf{147} (1998), no. 2, 357--390.

\bibitem{FranksManning} J. Franks and A. Manning, Anosov diffeomorphisms on infranilmanifolds, \textit{Topology} \textbf{10} (1971), no. 3, 253--262.

\bibitem{RuelleSullivan} D. Ruelle and D. Sullivan, ``Currents, flows and diffeomorphisms'', Topology 14 (1975), 319--327.

\bibitem{Sinai1972} Ya. G. Sinai, ``Gibbs measures in ergodic theory'', Russian Math. Surveys 27 (1972), 21--69.

\bibitem{Bowen1975} R. Bowen, \emph{Equilibrium States and the Ergodic Theory of Anosov Diffeomorphisms}, Springer Lecture Notes in Math. 470, 1975.

\bibitem{ParryPollicott} W. Parry and M. Pollicott, ``Zeta functions and the periodic orbit structure of hyperbolic dynamics'', Astérisque 187-188 (1990).

\bibitem{Stallings} J. Stallings, ``On fibering certain 3-manifolds'', Topology of 3-manifolds and related topics (1962).

\bibitem{Ehresmann} C. Ehresmann, ``Les connexions infinitésimales dans un espace fibré différentiable'', Colloque de Topologie, Bruxelles (1950).

\bibitem{FarbMargalit} B. Farb and D. Margalit, \emph{A Primer on Mapping Class Groups}, Princeton Mathematical Series, vol. 49, Princeton University Press, Princeton, 2011.

\bibitem{Franks1969} J. Franks, \emph{Anosov diffeomorphisms on tori}, Trans. Amer. Math. Soc. \textbf{145} (1969), 117--124.

\bibitem{FranksSurvey} J. Franks, \emph{Anosov diffeomorphisms}, in \emph{Global Analysis}, Proc. Sympos. Pure Math., vol. 14, American Mathematical Society, Providence, RI, 1970, pp. 61--93.

\bibitem{HammerlindlPotrieSurvey} A. Hammerlindl and R. Potrie, \emph{Partial hyperbolicity and classification: a survey}, in \emph{Proceedings of the International Congress of Mathematicians 2018}, vol. III, World Scientific, 2018, pp. 1879--1908.

\bibitem{Hatcher} A. Hatcher, \emph{Algebraic Topology}, Cambridge University Press, Cambridge, 2002.

\bibitem{Hillman} J. A. Hillman, \emph{Four-Manifolds, Geometries and Knots}, Geometry \& Topology Monographs, vol. 5, Geometry \& Topology Publications, Coventry, 2002.

\bibitem{Hirsch} M. W. Hirsch, \emph{Anosov maps, polycyclic groups and homology}, Topology \textbf{10} (1971), 177--183.

\bibitem{Husemoller} D. Husemoller, \emph{Fibre Bundles}, third edition, Graduate Texts in Mathematics, vol. 20, Springer-Verlag, New York, 1994.

\bibitem{ManningNil} A. Manning, \emph{Anosov diffeomorphisms on nilmanifolds}, Proc. Amer. Math. Soc. \textbf{38} (1973), 423--426.

\bibitem{Manning1974} A. Manning, \emph{There are no new Anosov diffeomorphisms on tori}, Amer. J. Math. \textbf{96} (1974), 422--429.

\bibitem{Neofytidis} C. Neofytidis, \emph{Anosov diffeomorphisms on Thurston geometric \(4\)-manifolds}, Geom. Dedicata \textbf{213} (2021), 251--266.

\bibitem{RodriguezHertzSurvey} F. Rodriguez Hertz, M. A. Rodriguez Hertz, and R. Ures, \emph{A survey of partially hyperbolic dynamics}, in \emph{Partially Hyperbolic Dynamics, Laminations, and Teichm\"uller Flow}, Fields Institute Communications, vol. 51, American Mathematical Society, Providence, RI, 2007, pp. 35--87.

\bibitem{CandelConlon} A. Candel and L. Conlon, \emph{Foliations I, II}, Graduate Studies in Mathematics, vol. 23, 60, American Mathematical Society, 2000, 2003.

\bibitem{HaefligerGroupoids} A. Haefliger, \emph{Groupoids and foliations}, in \emph{Groupoids in Analysis, Geometry, and Physics}, Contemp. Math. 282, Amer. Math. Soc., 2001, pp. 83--100.

\bibitem{Molino} P. Molino, \emph{Riemannian Foliations}, Progress in Mathematics, vol. 73, Birkh\"auser, 1988.

\end{thebibliography}
\end{document}